\pgfplotsset{width=7cm,compat=1.8}
\newtheorem{theorem}{Theorem}[section]
\newtheorem*{theorem*}{Theorem}
\newtheorem{definition}[theorem]{Definition}
\theoremstyle{plain}
\newtheorem{corollary}[theorem]{Corollary}
\newtheorem*{corollary*}{Corollary}
\newtheorem{lemma}[theorem]{Lemma}
\newtheorem{proposition}[theorem]{Proposition}
\newcounter{mt}
\newtheorem{MainTheorem}[mt]{Theorem}
\newtheorem{MainCorollary}[mt]{Corollary}
\newtheorem*{lemma*}{Lemma}
\newtheorem*{question*}{Question}
\theoremstyle{definition}
\newtheorem{remark}[theorem]{Remark}
\newcommand{\tpitchfork}{%
	\vbox{
		\baselineskip\z@skip
		\lineskip-.52ex
		\lineskiplimit\maxdimen
		\m@th
		\ialign{##\crcr\hidewidth\smash{$-$}\hidewidth\crcr$\pitchfork$\crcr}
	}%
}
\newcommand{\CC}{\mathbb{C}}
\newcommand{\RR}{\mathbb{R}}
\newcommand{\VConv}{\mathrm{VConv}}
\newcommand{\Conv}{\mathrm{Conv}}
\newcommand{\ConvO}{\mathrm{Conv}_{(0)}}
\DeclareMathOperator{\dom}{dom}
\newcommand{\GW}{\mathrm{GW}}
\newcommand{\MeasC}{\mathcal{M}_c}
\newcommand{\DistribC}{\mathcal{D}_c'}
\newcommand{\Distrib}{\mathcal{D}'}
\newcommand{\convexbodies}{\mathcal{K}^n}
\DeclareMathOperator{\GL}{GL}
\DeclareMathOperator{\SL}{SL}
\DeclareMathOperator{\vol}{vol}
\DeclareMathOperator{\id}{id}
\DeclareMathOperator{\supp}{supp}
\DeclareMathOperator{\Grass}{Gr}
\DeclareMathOperator{\linspan}{span}
\DeclareMathOperator{\interior}{int}
\title{Equivariant Valuations on Convex Functions}
\author{Georg C. Hofst\"atter}
\address{Institute of Discrete Mathematics and Geometry, TU Wien, 1040 Vienna, Austria}
\email{georg.hofstaetter@tuwien.ac.at}
\author{Jonas Knoerr}
\address{Institute of Discrete Mathematics and Geometry, TU Wien, 1040 Vienna, Austria}
\email{jonas.knoerr@tuwien.ac.at}
\thanks{{\it MSC classification}:
	52A41, 
	26B25, 
	52B45. 
}
 \date{\today}
\begin{document}
	
	\begin{abstract}
		We classify all continuous valuations on the space of finite convex functions with values in the same space which are dually epi-translation-invariant and equi- resp.\ contravariant with respect to volume-preserving linear maps. We thereby identify the valuation-theoretic functional analogues of the difference body map and show that there does not exist a generalization of the projection body map in this setting. This non-existence result is shown to also hold true for valuations with values in the space of convex functions that are finite in a neighborhood of the origin.
	\end{abstract}
	
	\maketitle
	
	\section{Introduction}
	Let $\convexbodies$ denote the set of all convex bodies in $\RR^n$ (compact, convex, non-empty subsets); throughout, we assume $n \geq 2$. The difference body map $\mathrm{D}: \convexbodies\to\convexbodies$ and the projection body map $\Pi:\convexbodies\to\convexbodies$ are defined by $\mathrm{D} K = K + (-K)$ and
	\begin{align*}
		\vol_1(\Pi K | \linspan\{u\}) = \vol_{n-1}(K | u^\perp), \quad u \in \RR^n \setminus \{0\}, K \in \convexbodies,
	\end{align*}
	where addition is Minkowski addition, $\vol_k$ denotes the $k$-dimensional Lebesgue measure, $\cdot|\cdot$ denotes orthogonal projection, and $(\cdot)^\perp$ denotes the orthogonal complement. Projection bodies were introduced by Minkowski at the beginning of the previous century and have since then proved essential in the study of projections of convex bodies (see, e.g., \cite{Gardner2006, Schneider2014} and the references therein). Notably, projection and difference bodies play a key role in the solution of the Rogers--Shephard problem \cite{Shephard1964, Petty1967, Schneider1967}, in central inequalities in convex geometry like Petty's projection inequality~\cite{Petty1971}, Zhang's inequality~\cite{Zhang1991} and the Rogers--Shephard~\cite{Rogers1958b} inequality, and are the key objects in Petty's conjectured inequality~\cite{Petty1971}, which is still an open problem.
	
	About twenty years ago, Ludwig~\cite{Ludwig2002b, Ludwig2005} revealed a new perspective on projection and difference bodies. In her seminal work, she characterized them by their compatibility with respect to volume-preserving linear maps and a valuation property:
	\begin{theorem}[\cite{Ludwig2002b, Ludwig2005}]\label{thm:charProjDiffBody}
		Let $\Phi: \convexbodies \to \convexbodies$ be a continuous, translation-invariant Minkowski valuation.
		\begin{itemize}
			\item If $\Phi$ is $\SL(n,\RR)$-equivariant, then $\Phi = c \mathrm{D}$ for some $c \in \RR$.
			\item If $\Phi$ is $\SL(n,\RR)$-contravariant, then $\Phi = c \Pi$ for some $c \in \RR$. 
		\end{itemize}
	\end{theorem}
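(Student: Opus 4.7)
The plan is to pass from $\Phi$ to its support function $h_{\Phi K}$, translate all hypotheses into pointwise functional conditions, reduce to polytopes via continuity and then to simplices via the valuation property, and finally exploit the $\SL(n,\RR)$-rigidity of simplices to identify $\Phi$ with a scalar multiple of $\Diffbody$ or $\Pi$.

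First I would rewrite the hypotheses. Since $\Phi K$ is a convex body, it is determined by $h_{\Phi K}\colon \RR^n \to \RR$. Minkowski additivity of $\Phi$ on pairs with convex union becomes pointwise additivity of $h_{\Phi(\cdot)}(u)$ for every $u$; translation-invariance becomes $h_{\Phi(K+x)} = h_{\Phi K}$; and equivariance (resp.\ contravariance) becomes $h_{\Phi(\phi K)}(u) = h_{\Phi K}(\phi^t u)$ (resp.\ $h_{\Phi K}(\phi^{-1} u)$) for $\phi \in \SL(n,\RR)$ and $u \in \RR^n$. Continuity in the Hausdorff metric together with density of polytopes reduces the task to determining $\Phi$ on polytopes, and iterating the valuation property along simplicial subdivisions reduces it further to values on simplices of each dimension at most $n$.

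Next I would fix the degree of homogeneity. For each fixed $u \in \unitsurfn$, the map $K \mapsto h_{\Phi K}(u)$ is a continuous translation-invariant real-valued valuation, so McMullen's decomposition splits it into components $\Phi^{(k)}$ homogeneous of degrees $k = 0, 1, \ldots, n$, each inheriting the $\SL(n,\RR)$-equivariance (resp.\ contravariance). Matching how $\Phi^{(k)}$ scales under $K \mapsto \lambda K$ with how support functions transform under the $\SL(n,\RR)$-action on simplices of a prescribed volume narrows the admissible degrees to $k = 1$ in the equivariant case (the degree of $\Diffbody$) and to $k = n-1$ in the contravariant case (the degree of the surface-area-measure content of $\Pi$).

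Finally I would use the rigidity on simplices. The stabilizer of the centered standard simplex $T$ in the affine group $\SL(n,\RR) \ltimes \RR^n$ contains the full symmetric group $S_{n+1}$ permuting vertices, while the $\SL(n,\RR)$-orbit of $T$ consists of all centered simplices of the same volume. For the equivariant, degree-$1$ case these symmetries force $h_{\Phi T}(u) = c\bigl(h_T(u) + h_T(-u)\bigr)$, so $\Phi T = c\,\Diffbody T$; continuity and the valuation property then extend this to $\Phi = c\,\Diffbody$ on all of $\convexbodies$. In the contravariant, degree-$(n-1)$ case the analogous argument identifies $h_{\Phi K}$ up to a scalar with the cosine transform of $S_{n-1}(K,\cdot)$, yielding $\Phi = c\,\Pi$. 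The main obstacle I anticipate is fixing the degree: because $\SL(n,\RR)$ contains no nontrivial scaling, McMullen's decomposition does not by itself isolate a single degree, and one must carefully combine continuity, the valuation property, and the $\SL(n,\RR)$-orbit structure on simplices (while verifying that each component $\Phi^{(k)}$ still takes genuine convex-body values, rather than merely formal support-function-valued ones) to eliminate the undesired degrees.
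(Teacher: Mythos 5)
The paper cites Theorem~\ref{thm:charProjDiffBody} from Ludwig's work rather than proving it, so there is no in-paper proof to compare against; what follows is a review of your sketch on its own terms.

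Your high-level plan (pass to support functions, restrict to polytopes, exploit the valuation property on dissections and the $\SL(n,\RR)$-orbit structure of simplices) is indeed the spirit of Ludwig's arguments, but the sketch contains real gaps at the points where the actual work lies. First, ``iterating the valuation property along simplicial subdivisions'' is not available in the form you use it: for $K,L\in\convexbodies$ the valuation identity only applies when $K\cup L$ is convex, and in a triangulation of a polytope the pairwise unions of cells are generically non-convex. One must either invoke Groemer's extension theorem to the polyconvex ring, or argue via iterated hyperplane cuts of a single convex body into two convex pieces; your phrasing presupposes an inclusion--exclusion that has not been justified. Second, and more seriously, the degree-fixing step is asserted rather than argued. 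You correctly observe that $\SL(n,\RR)$ contains no non-trivial dilations, so equivariance/contravariance alone does not pick out a homogeneity degree, and McMullen's decomposition of the scalar valuations $K\mapsto h_{\Phi K}(u)$ does not by itself yield Minkowski-valuation-valued homogeneous components (their values need not be support functions). Showing that the degrees $k\neq1$ (equivariant) and $k\neq n-1$ (contravariant) contribute nothing is precisely the technical core of Ludwig's proofs, done by analyzing $\Phi$ on lower-dimensional simplices and solving the resulting Cauchy-type functional equations; your sketch does not supply a substitute. Third, in the rigidity step the affine symmetry group of the regular simplex $T$ is $S_{n+1}$, but the odd permutations act by linear parts of determinant $-1$; only $A_{n+1}$ lies in $\SL(n,\RR)\ltimes\RR^n$, so your invocation of the ``full symmetric group'' is not quite available, and in any case the claimed conclusion $h_{\Phi T}(u)=c\bigl(h_T(u)+h_T(-u)\bigr)$ does not follow from symmetry alone but again needs the functional-equation analysis of $\Phi$ on dissections of $T$.

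In short: the route you propose is the correct one in broad outline, but the three steps flagged above are exactly where the substance of Ludwig's theorems sits, and as written the proposal replaces them with assertions. To close the gaps you would need (i) a precise dissection scheme compatible with the valuation identity (or an appeal to Groemer extension), (ii) an explicit argument eliminating the unwanted homogeneity degrees, and (iii) a derivation of the Cauchy-type functional equations on simplices from the valuation property, together with their solution.
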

	Here, a map $\Phi: \convexbodies \to \convexbodies$ is called a Minkowski valuation if  $\Phi(K \cup L) + \Phi(K \cap L) = \Phi(K) + \Phi(L)$ whenever the union $K \cup L$ of $K, L \in \convexbodies$ is convex. It is called $G$-equivariant if $\Phi(gK)=g\Phi(K)$, and $G$-contravariant if $\Phi(gK) = g^{-T}\Phi(K)$ for all $K \in \convexbodies$ and $g \in G$ in a subgroup $G$ of the general linear group $\GL(n,\RR)$.

	Following Theorem~\ref{thm:charProjDiffBody}, many fundamental geometric constructions including ($L_p$)~intersection bodies, centroid bodies and affine surface area were characterized as equivariant valuations (see \cite{Abardia2012b, Abardia2011b,Abardia2015, Haberl2006, Ludwig2006, Ludwig2003,Ludwig2010, Haberl2012b}).
	
	\medskip
	
	In recent years, substantial efforts were made to transfer and generalize geometric concepts (for convex bodies) to a functional setting, e.g., to convex or log-concave functions, see \cite{Hofstaetter2021, Hofstaetter2023, Artstein2004, Knoerr2023, Kolesnikov2020,Rotem2012, Rotem2020, Rotem2021, Alesker2019, Colesanti2017d, Colesanti2013, Colesanti2005, Colesanti2017, Colesanti2021, Colesanti2022, Haddad2020, Knoerr2023b, Mussnig2021,Mussnig2021b}. While there are several natural candidates for a difference function map (examined, e.g., in \cite{Colesanti2006}), there is so far no suitable candidate for a generalization of the projection body map. In this article, we contribute to the solution of this problem, taking the perspective of valuation theory inspired by Theorem~\ref{thm:charProjDiffBody}. More concretely put, we aim to identify the correct valuation-theoretic analogues of difference and projection bodies on the space $\Conv(\RR^n, \RR)$ of finite convex functions $f:\RR^n \to \RR$.

	In this setting, a valuation on $\Conv(\RR^n, \RR)$ is a map $\mu: \Conv(\RR^n,\RR) \to (\mathbb{A},+)$ into an abelian semi-group $(\mathbb{A},+)$ satisfying
	\begin{align*}
		\mu(\max\{f,h\}) + \mu(\min\{f,h\}) = \mu(f) + \mu(h),
	\end{align*}
	whenever $f,h,\min\{f,h\} \in \Conv(\RR^n, \RR)$, where we denote by $\max$ resp.\ $\min$ the pointwise maximum resp. minimum. Such a valuation is called \emph{dually epi-trans\-lation-invariant} if $\mu(f+\ell) = \mu(f)$ for all $f \in \Conv(\RR^n, \RR)$ and all affine maps~$\ell:\RR^n\rightarrow\RR$.  We refer to \cite{Colesanti2019b, Colesanti2019, Knoerr2020a,Knoerr2023} for the geometric interpretation of this notion and an interpretation in terms of translation invariant valuations on convex bodies. Continuity is defined with respect to the topology induced by epi-convergence of convex functions, which for $\Conv(\RR^n,\RR)$ coincides with local uniform as well as pointwise convergence. 
	
	Our first main result is a classification of $\SL(n,\RR)$-equivariant valuations on $\Conv(\RR^n, \RR)$, thereby determining the valuation-theoretic difference function maps. Here, a map $\Psi:\Conv(\RR^n,\RR)\rightarrow\Conv(\RR^n,\RR)$ is called $G$-equivariant if $\Psi(f\circ g)=\Psi(f)\circ g$ for $f\in\Conv(\RR^n,\RR)$ and $g\in G$ in a subgroup $G\subset \GL(n,\RR)$. We denote by $\MeasC^+(\RR)$ the space of non-negative, finite Borel measures on $\RR$ with compact support.
	\begin{MainTheorem}
		\label{mthm:charSLRequivVal}
		A map $\Psi: \Conv(\RR^n, \RR) \rightarrow \Conv(\RR^n,\RR)$ is a continuous, dually epi-translation-invariant and $\SL(n, \RR)$-equi\-variant valuation if and only if there exist
		\begin{itemize}
			\item $c \in \RR$;
			\item $\nu\in\MeasC^+(\RR)$ with $\int_{\RR\setminus\{0\}}|s|^{-1} d\nu(s) < \infty$ and $\int_{\RR\setminus\{0\}}s^{-1} d\nu(s) = 0$;
		\end{itemize}
		such that
		\begin{align}\label{eq:thmSLREquivVal}
			\Psi(f)[x] = c + \int_{\RR\setminus\{0\}} \frac{f(sx)-f(0)}{|s|^2} d\nu(s), \quad x \in \RR^n,
		\end{align}
		for every $f \in \Conv(\RR^n,\RR)$.
	\end{MainTheorem}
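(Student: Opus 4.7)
The plan has two directions. For the ``if'' direction I would verify the stated properties by direct computation: for each $s\in\supp\nu$ the map $x\mapsto(f(sx)-f(0))/|s|^2$ is convex in $x$ as the composition of $f$ with the linear map $x\mapsto sx$ shifted by a constant, and integration against the nonnegative measure $\nu$ preserves convexity. The hypothesis $\int|s|^{-1}\,d\nu(s)<\infty$ combined with the local Lipschitz property of $f$ gives integrability, so $\Psi(f)\in\Conv(\RR^n,\RR)$. The valuation property is pointwise in $x$; continuity follows from epi-convergence together with dominated convergence; $\SL(n,\RR)$-equivariance is immediate from the formula. Dual epi-translation invariance reduces via $\ell(sx)-\ell(0)=s\langle v,x\rangle$ (for $\ell(y)=a+\langle v,y\rangle$) to $\int s^{-1}\,d\nu(s)=0$, which is precisely the stated moment condition.

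For the ``only if'' direction, I would fix $\Psi$ and for each $x\in\RR^n$ consider the functional $\mu_x(f):=\Psi(f)[x]$, which is a continuous, dually epi-translation-invariant, $\RR$-valued valuation on $\Conv(\RR^n,\RR)$. The equivariance of $\Psi$ translates to $\mu_x(f\circ g)=\mu_{gx}(f)$ for every $g\in\SL(n,\RR)$, so since $\SL(n,\RR)$ acts transitively on $\RR^n\setminus\{0\}$, the family $\{\mu_x\}_{x\ne 0}$ is recovered from $\mu_{e_1}$, which is invariant under the stabiliser $\mathrm{Stab}(e_1)\subset\SL(n,\RR)$ consisting of matrices $\begin{pmatrix}1 & v^T\\ 0 & A\end{pmatrix}$ with $v\in\RR^{n-1}$ and $A\in\SL(n-1,\RR)$, while $\mu_0$ is globally $\SL(n,\RR)$-invariant. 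I would then invoke a McMullen-type decomposition for continuous dually epi-translation-invariant real-valued valuations into epi-homogeneous components, each inheriting the stabiliser invariance.

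The principal obstacle is to show that stabiliser invariance forces $\mu_{e_1}(f)$ to depend only on the restriction $f|_{\RR e_1}$ and to have epi-homogeneity degree at most $1$. Heuristically, the shear subgroup $\{\begin{pmatrix}1 & v^T\\ 0 & I\end{pmatrix}:v\in\RR^{n-1}\}$ has unbounded orbits transverse to $e_1$ while the $\SL(n-1,\RR)$-factor acts transitively on directions perpendicular to $e_1$; together these should annihilate any transverse dependence in the distributional kernel of a given homogeneous component. The vanishing of components of epi-homogeneity degree $\ge 2$ should follow because any nonzero such component, once propagated to all $x\ne 0$ by equivariance, would violate convexity of $x\mapsto\Psi(f)[x]$ for generic $f$; verifying this convexity obstruction rigorously is the subtlest technical point. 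Once this reduction is achieved, $\mu_{e_1}$ becomes a continuous, dually epi-translation-invariant, $\RR$-valued valuation on $\Conv(\RR,\RR)$, and a representation result together with the moment condition imposed by invariance under affine $\ell$ yields $\mu_{e_1}(f)=c+\int(f(se_1)-f(0))/|s|^2\,d\nu(s)$. Equivariance propagates the formula to every $x\ne 0$, and continuity at the origin pins down $\mu_0$ and the constant $c$.
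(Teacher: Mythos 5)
Your high-level plan (pass to the family $\Psi_x(f)=\Psi(f)[x]$, exploit the stabiliser invariance and the McMullen decomposition, reduce to the line $\linspan\{x\}$, finish by a representation theorem) is the same skeleton the paper uses. However, the two places you flag as ``heuristic'' or ``subtlest'' are precisely where the work is, and your proposed way of handling the second one departs from the paper and would likely not go through.

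\textbf{The reduction to $f|_{\RR e_1}$ is the main technical content, not a heuristic.} You correctly observe that the shear subgroup of $\mathrm{Stab}(e_1)$ forces $\supp\Psi_{e_1}\subset\linspan\{e_1\}$ (the paper derives this from compactness of the support of the Goodey--Weil distribution and its invariance). But knowing that the \emph{support} of a valuation is contained in a line does not automatically mean the valuation depends only on the restriction of $f$ to that line: the Goodey--Weil distribution can a priori involve normal derivatives of $f$. The paper proves a dedicated result (Theorem~\ref{thm:restrSuppGWRestrVal}): if $\supp\mu\subset E$ then $\mu(f)=\mu_E(f|_E)$, with $\mu_E\in\VConv_k(E,F)$. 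Its proof writes the Goodey--Weil distribution as $u_0+u_1\partial_{x_n}+u_2\partial_{x_n}^2$ along $E$ and kills $u_1,u_2$ by testing against carefully chosen mollified cone-type functions $\varphi'(x')\sqrt{x_n^2+\varepsilon^2}$ (and shifted variants) whose second normal derivative blows up like $1/\varepsilon$, contradicting finiteness of $\mu$. Your proposal does not supply any substitute for this argument.

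\textbf{You do not need, and probably cannot use, convexity of $x\mapsto\Psi(f)[x]$ to kill degree $\ge 2$.} Once Theorem~\ref{thm:restrSuppGWRestrVal} is available, the degree bound is automatic: a valuation that depends only on $f|_E$ is an element of $\VConv(E)$, and the McMullen decomposition on $E\cong\RR$ gives only degrees $0$ and $1$. The paper even remarks that convexity of $\Psi(f)$ plays no role in this step, and that the degree restriction holds for arbitrary target function spaces. Your convexity-obstruction idea would have to produce, for every hypothetical nonzero $k$-homogeneous component with $k\ge 2$, a test function $f$ witnessing nonconvexity of $\Psi(f)$; this is far from obvious and is not how the result is obtained.

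\textbf{The final identification also has a gap.} After restricting to the line, $\mu_{e_1}$ is indeed a $1$-homogeneous element of $\VConv_1(\RR)$, but that space is not a priori of the form $f\mapsto\int (f(s)-f(0))/|s|^2\,d\nu(s)$ with $\nu\ge 0$; a general Goodey--Weil distribution of order two on $\RR$ has more freedom. The paper instead upgrades the given $\SL(n,\RR)$-equivariance of $\Psi^1$ to full $\GL(n,\RR)$-equivariance, using exactly the restriction property (for any $g\in\GL(n,\RR)$, pick $\eta\in\SL(n,\RR)$ with $\eta x=gx$ so $f\circ g=f\circ\eta$ on $\linspan\{x\}$, and $\Psi^1(\cdot)[x]$ only sees $\linspan\{x\}$), and then applies Theorem~\ref{mthm:CharGLEquivEndo}, the earlier classification of additive $\GL(n,\RR)$-equivariant endomorphisms of $\Conv(\RR^n,\RR)$, which is where the convexity of $\Psi(f)$ (and hence the positivity of $\nu$) actually enters. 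Your proposal skips the passage to $\GL(n,\RR)$-equivariance and appeals to an unnamed representation result, which does not close the argument.
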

	Note that by setting $c=0$ and $\nu = \delta_{-1} + \delta_1$, we obtain the map $\Psi(f)[x] =f(x) + f(-x) - 2f(0)$, $x \in \RR^n$, a dually epi-translation-invariant version of the map introduced and investigated as difference function map in \cite{Colesanti2006}. In the proof of Theorem~\ref{mthm:charSLRequivVal}, we make use of a construction introduced in \cite{Knoerr2020a}, which allows us to associate a suitable distribution to any homogeneous valuation in this class, called Goodey--Weil distribution. The equivariance property then translates to an invariance property of the Goodey--Weil distribution, and we show that this implies that its support must be lower dimensional. The main technical result of this article establishes that valuations with this property can be obtained by a simple restriction procedure from valuations defined on the corresponding lower dimensional subspace (compare Theorem \ref{thm:restrSuppGWRestrVal}). In the setting of Theorem \ref{mthm:charSLRequivVal}, this allows us to reduce the problem to valuations which are homogeneous of degree $0$ or $1$ and thus to apply a previous result from \cite{Hofstaetter2023} for additive endomorphisms.
	
	In our second main result, we apply a similar strategy to characterize all $\SL(n,\RR)$-contravariant valuations, where a map $\Psi:\Conv(\RR^n,\RR)\rightarrow\Conv(\RR^n,\RR)$ is called $\SL(n,\RR)$-contra\-variant if $\Psi(f\circ g)=\Psi(f)\circ g^{-T}$ for $g\in \SL(n,\RR)$ and $f\in\Conv(\RR^n,\RR)$. Except for the special case $n=2$, which can be deduced from Theorem~\ref{mthm:charSLRequivVal} by composing with a rotation by $\pi/2$, only trivial (that is, constant) valuations can appear here. Consequently, there is no direct analogue of the projection body map in this setting.
	\begin{MainTheorem}
		\label{mthm:charSLRcontrVal}
		A map $\Psi: \Conv(\RR^n, \RR) \rightarrow \Conv(\RR^n,\RR)$ is a continuous, dually epi-translation-invariant and $\SL(n, \RR)$-contra\-variant valuation if and only if
		\begin{itemize}
			\item $n=2$, and there exist $c \in \RR$ and $\nu\in\MeasC^+(\RR)$ with $\int_{\RR\setminus\{0\}}|s|^{-1} d\nu(s) < \infty$ and $\int_{\RR\setminus\{0\}}s^{-1} d\nu(s) = 0$, such that
			\begin{align}\label{eq:thmSLRcontrVal}
				\Psi(f)[x] = c + \int_{\RR\setminus\{0\}} \frac{f(s\vartheta x)-f(0)}{|s|^2} d\nu(s), \quad x \in \RR^2,
			\end{align}
			for every $f \in \Conv(\RR^2,\RR)$, where $\vartheta$ denotes a rotation by $\pi/2$;
			\item $n\geq 3$, and $\Psi(f) \equiv c$ for some $c \in \RR$ and every $f \in \Conv(\RR^n,\RR)$.
		\end{itemize}
	\end{MainTheorem}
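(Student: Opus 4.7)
The plan is to split the proof according to the dimension. For $n=2$, the inverse-transpose automorphism of $\SL(2,\RR)$ is inner and the statement reduces to Main Theorem~\ref{mthm:charSLRequivVal}. For $n\geq 3$, the inverse-transpose is a genuinely external automorphism, and the plan is to show that contravariance is so restrictive that only constants survive, by combining the Goodey--Weil distribution machinery with an orbit analysis.

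For $n=2$, let $\vartheta\in\SO(2)$ be the rotation by $\pi/2$. A direct $2\times 2$ computation yields $\vartheta g\vartheta^{-1}=g^{-T}$ for every $g\in\SL(2,\RR)$. Given a contravariant $\Psi$ as in the hypothesis, I would define $\tilde\Psi(f)[x]:=\Psi(f)[\vartheta x]$ and verify that $\tilde\Psi$ is continuous, dually epi-translation-invariant, a valuation, and $\SL(2,\RR)$-equivariant: $\tilde\Psi(f\circ g)[x]=\Psi(f)[g^{-T}\vartheta x]=\Psi(f)[\vartheta g x]=\tilde\Psi(f)[gx]$. Main Theorem~\ref{mthm:charSLRequivVal} then gives the integral representation for $\tilde\Psi$, from which the formula \eqref{eq:thmSLRcontrVal} follows by substituting $x\mapsto\vartheta^{-1}x$ and absorbing the sign $\vartheta^{-1}=-\vartheta$ via the change of variable $s\mapsto -s$ in the measure $\nu$. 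The converse direction is a direct verification.

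For $n\geq 3$, the first step is the McMullen-type homogeneous decomposition $\Psi=\sum_{k=0}^{n}\Psi_k$ from \cite{Knoerr2020a}; each $\Psi_k$ inherits continuity, dual epi-translation-invariance, and $\SL(n,\RR)$-contravariance. The degree-zero component $\Psi_0$ is constant in $f$ and takes a value $h\in\Conv(\RR^n,\RR)$; contravariance forces $h\circ g^{-T}=h$ for every $g\in\SL(n,\RR)$, and since $\SL(n,\RR)$ acts transitively on $\RR^n\setminus\{0\}$ via $g^{-T}$, continuity yields $h\equiv c$ for some $c\in\RR$. For $k\geq 1$, I would consider, for each $x$, the scalar valuation $f\mapsto\Psi_k(f)[x]$ and its associated compactly supported Goodey--Weil distribution. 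At $x=e_1$, the valuation is invariant under the stabilizer $G_{e_1}=\{g\in\SL(n,\RR):g^{-T}e_1=e_1\}\cong\SL(n-1,\RR)\ltimes\RR^{n-1}$, so its Goodey--Weil distribution must be compactly supported and $G_{e_1}$-invariant. A direct orbit analysis shows that for $n\geq 3$ every non-trivial $G_{e_1}$-orbit in $\RR^n$ is unbounded (either an affine hyperplane $\{y_1=c\}$ with $c\neq 0$, or the punctured hyperplane $\{y_1=0\}\setminus\{0\}$, on which $\SL(n-1,\RR)$ with $n-1\geq 2$ acts transitively); consequently the distribution must be supported at the origin. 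Theorem~\ref{thm:restrSuppGWRestrVal} combined with dual epi-translation-invariance and $k$-homogeneity then forces $\Psi_k(f)[e_1]=0$ for every $f$, and transitivity of the $g^{-T}$-action on $\RR^n\setminus\{0\}$ together with continuity extend this vanishing to all $x\in\RR^n$.

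The main obstacle I expect is making the Goodey--Weil transfer precise in the \emph{vector-valued} setting, where one must track a coherent family of distributions $\{\mathrm{GW}(\Psi_{k,x})\}_{x\in\RR^n}$ together with the covariance induced by the $g^{-T}$-action and the convexity of $x\mapsto\Psi_k(f)[x]$. The orbit-theoretic rigidity that rules out non-trivial compactly supported $G_{e_1}$-invariant distributions uses $n\geq 3$ essentially; in $n=2$ the group $\SL(n-1,\RR)$ is trivial and this rigidity fails, consistently with the existence of non-trivial examples in that case. Deducing that a Goodey--Weil distribution supported at the origin forces the valuation itself to vanish in degree $k\geq 1$ should follow by applying Theorem~\ref{thm:restrSuppGWRestrVal} in the degenerate case where the underlying subspace is $\{0\}$, combined with the fact that dual epi-translation-invariance eliminates any residual zero-degree contribution.
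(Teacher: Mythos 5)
Your proposal is correct and follows essentially the same route as the paper. For $n=2$ you conjugate by a $\pi/2$ rotation to reduce to Theorem~\ref{mthm:charSLRequivVal} exactly as the paper does (the paper pre-composes the argument with $\vartheta^{-1}$, you post-compose with $\vartheta$; the two differ by contravariance and an absorption of $\vartheta^2=-\id$ into $\nu$). For $n\geq 3$ you analyze the $G_{e_1}$-orbits directly, observing that every non-trivial orbit is unbounded; the paper reaches the same conclusion in two steps (first showing $\supp\Psi_x\subset x^\perp$ via shears, then collapsing to $\{0\}$ via the $\SL(n-1,\RR)$ subgroup), but the content is identical. Also note that the ``vector-valued Goodey--Weil transfer'' you flag as the main obstacle is not actually an obstacle: like the paper, you only ever need the scalar valuations $\Psi_k(\cdot)[x]$ for each fixed $x$, and you already invoke exactly this reduction.
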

	Let us point out that our proof of Theorem~\ref{mthm:charSLRcontrVal} applies in all settings where the notion of Goodey--Weil distribution is available and where these distributions have compact support, and that the same restrictions on the dimensions of the support apply in these cases.
	It is an interesting question whether one can modify the setting such that an appropriate analogue of the projection body map exists. There are four obvious candidates for changes to the conditions in Theorem~\ref{mthm:charSLRcontrVal}: change the invariance assumptions, change the topology, restrict the domain, or change the codomain. Some of the possible changes to the domain are covered by the extension results in \cite{Knoerr2024}, which show that one can in general not expect a qualitative change of the results.	Our next result shows that the situation also does not change if we change the codomain to the space $\Conv_{(0)}(\RR^n)$ of all convex functions $\RR^n \to (-\infty, \infty]$ which are lower semi-continuous and finite in a neighborhood of the origin. 
	
	\begin{MainTheorem}
		\label{mthm:stateConvO}
		Let $\Psi: \Conv(\RR^n, \RR) \to \ConvO(\RR^n)$ be a continuous, dually epi-translation-invariant valuation. If $\Psi(0)<\infty$, then $\Psi(\Conv(\RR^n, \RR)) \subset \Conv(\RR^n, \RR)$.
		
		Hence, Theorems~\ref{mthm:charSLRequivVal} and \ref{mthm:charSLRcontrVal} hold verbatim for valuations with values in $\ConvO(\RR^n)$.
	\end{MainTheorem}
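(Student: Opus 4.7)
The plan is to reduce to pointwise evaluations and adapt the McMullen-type homogeneous decomposition of \cite{Knoerr2020a} from the real-valued to the present $\ConvO$-valued setting.

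\textbf{Setup.} For each $x\in\RR^n$, define $\mu_x:\Conv(\RR^n,\RR)\to(-\infty,+\infty]$ by $\mu_x(f):=\Psi(f)(x)$. Each $\mu_x$ satisfies the valuation identity and dual epi-translation invariance wherever finite, and the conclusion is equivalent to $\mu_x$ being real-valued on $\Conv(\RR^n,\RR)$ for every $x$. Two observations follow immediately from the hypotheses. First, since every $\Psi(f)\in\ConvO(\RR^n)$ is finite at $0$, the functional $\mu_0$ is a continuous real-valued dually epi-translation-invariant valuation on $\Conv(\RR^n,\RR)$. Second, the assumption $\Psi(0)<\infty$ gives $\interior\dom\Psi(0)=\RR^n$, so for any $x\in\RR^n$ epi-continuity of $\Psi$ yields $\Psi(f_k)(x)\to\Psi(0)(x)$ whenever $f_k\to 0$ in epi-convergence. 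Consequently $\mu_x$ is real-valued and continuous on an epi-open neighborhood of $0\in\Conv(\RR^n,\RR)$, and in particular $t\mapsto\mu_x(tf)$ is finite on an interval $[0,T(x,f))$ with $T(x,f)>0$, for every $f$.

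\textbf{Local polynomial behavior and coefficient functions.} The core of the proof is to establish, for every $x\in\RR^n$ and $f\in\Conv(\RR^n,\RR)$, that $t\mapsto\mu_x(tf)$ is polynomial in $t$ of degree at most $n$ on $[0,T(x,f))$. This is the pointwise version of the polynomial identity $\mu_0(tf)=\sum_{k=0}^n t^k\mu_0^{(k)}(f)$ afforded to $\mu_0$ by the McMullen-type decomposition of \cite{Knoerr2020a}, and the derivation there uses only the valuation identity and continuity of $\mu$, both of which hold for $\mu_x$ on its domain of finiteness. Denote the coefficient of $t^k$ by $\Psi^{(k)}(f)(x)$; it is a well-defined finite real number for each $k\in\{0,\ldots,n\}$, $x\in\RR^n$, $f\in\Conv(\RR^n,\RR)$. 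Uniqueness of polynomial coefficients, applied to the two expansions of $\mu_x(t\cdot sf)=\mu_x((ts)f)$ on a common interval in $t$, yields the $k$-homogeneity identity $\Psi^{(k)}(sf)(x)=s^k\Psi^{(k)}(f)(x)$ for every $s\geq 0$.

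\textbf{Conclusion via extension.} It remains to verify $\Psi(f)(x)=\sum_{k=0}^n\Psi^{(k)}(f)(x)$ for every $x,f$: since the right-hand side is finite by the previous step, this gives $\Psi(f)(x)<\infty$. The identity holds for $sf$ with $s\in[0,T(x,f))$ by construction. Setting $T^*:=T(x,f)$ and $P(s):=\sum_k s^k\Psi^{(k)}(f)(x)$, epi-lower-semicontinuity of $\Psi(sf)(x)$ in $s$ combined with $\mu_x(sf)=P(s)$ for $s<T^*$ forces $\Psi(T^*f)(x)\leq P(T^*)<\infty$. A bootstrap using the $k$-homogeneity relation at the function $T^*f$ then extends the identity beyond $T^*$, and iteration covers all of $[0,\infty)$; evaluating at $s=1$ finishes the argument. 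The $k=0$ piece is the constant $\Psi(0)\in\Conv(\RR^n,\RR)$ guaranteed by hypothesis.

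\textbf{Main obstacle.} The principal technical burden is the polynomial statement of the second paragraph, namely proving that $t\mapsto\mu_x(tf)$ is polynomial on its interval of finiteness without knowing a priori that $\mu_x$ is globally real-valued. The finite-difference and polarization arguments behind the classical homogeneous decomposition in \cite{Knoerr2020a} must be verified to operate entirely inside the epi-neighborhood of $0$ on which $\mu_x$ is known to be finite. A closely related subtlety is the extension of the polynomial identity past the endpoint $T(x,f)$, which requires careful handling of boundary points where $\Psi(T^*f)$ may fail to be continuous at $x$; this is where the convex geometry of $\dom\Psi(sf)$ and the lower-semicontinuity of $\Psi$ in the epi-topology play an essential role.
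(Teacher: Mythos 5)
Your approach is genuinely different from the paper's, and the paper's authors explicitly flag the difficulty you are trying to overcome: they remark that one \emph{cannot} simply treat $\Psi$ as a family of real-valued valuations $\mu_x=\Psi(\cdot)[x]$, precisely because those functionals are not known a priori to be finite. Their proof instead conjugates to convex bodies in $\mathcal{K}^{n+1}$ via the map $\lfloor\cdot\rfloor$, proves finiteness by a dissection/induction argument over dimension (Lemma~\ref{lem:domFinKConv0}), and then applies the extension theorem of \cite{Knoerr2024} from the dense subspace $\Conv_{\mathrm{cd}}(\RR^n)^\ast$ to all of $\Conv(\RR^n,\RR)$. Your route tries to avoid all of this by localising the McMullen decomposition near $0$ and then propagating along rays; this would be a more elementary argument if it worked, but it has two gaps, both of which you yourself flag as ``obstacles'' without resolving.

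The more serious gap is the bootstrap. From $\mu_x(tf)=P(t)$ on $[0,T^*)$ and epi-lower-semicontinuity you correctly deduce $\Psi(T^*f)[x]\le P(T^*)<\infty$, i.e.\ $x\in\dom\Psi(T^*f)$. But to re-run the local finiteness argument at $T^*f$ you need $x\in\interior\dom\Psi(T^*f)$: the Rockafellar--Wets theorem you are implicitly using (epi-convergence $\Rightarrow$ locally uniform convergence away from $\partial\dom$) only converts epi-convergence to pointwise convergence at $x$ when $x$ is not a boundary point of the domain of the limit. Nothing in your argument excludes the scenario that $\dom\Psi(tf)$ shrinks as $t\uparrow T^*$ so that $x$ lands exactly on $\partial\dom\Psi(T^*f)$. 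In that case epi-convergence near $T^*f$ gives you no control on $\Psi(g)[x]$, the neighbourhood of finiteness of $\mu_x$ around $T^*f$ cannot be re-established, and the polynomial expansion cannot be continued past $T^*$. This is not a technicality you can wave away; it is the reason the paper works on the dense subspace $\Conv_{\mathrm{cd}}(\RR^n)^\ast$ (where finiteness is obtained by a geometric dissection, uniformly on compacta) and then invokes the one-sided extension theorem of \cite{Knoerr2024} rather than trying to propagate along rays.

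The second gap is the local polynomiality itself. You assert that the McMullen-type decomposition from \cite{Knoerr2020a} ``uses only the valuation identity and continuity,'' so that it should apply to $\mu_x$ on its domain of finiteness, but this is not a small footnote: the existing proofs of polynomiality for $\VConv$ (both in \cite{Colesanti2019b}, via conjugation to translation-invariant valuations on $\mathcal{K}^{n+1}$ and McMullen's theorem there, and in \cite{Knoerr2020a}) are global arguments for real-valued valuations, and it is not clear that they localise to an epi-neighbourhood of $0$. You would need to reprove the polynomial expansion from scratch in this local setting. Combined with the bootstrap problem, the proposal as written establishes neither the key inequality $\Psi(f)[x]<\infty$ nor a credible path to it, so it does not constitute a proof of the theorem.
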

Let us remark that the methods used in the proofs of Theorems~\ref{mthm:charSLRequivVal} and \ref{mthm:charSLRcontrVal} can not be applied directly to the setting in Theorem~\ref{mthm:stateConvO} since we can not simply consider $\Psi$ as a family of real-valued valuations. Instead, the proof relies on an extension result for valuations defined on dense subspaces of $\Conv(\RR^n,\RR)$ obtained in \cite{Knoerr2024}.

	\bigskip
	
	By a direct adaption of the methods, a similar statement as Theorem~\ref{mthm:charSLRcontrVal} also holds in complex vector spaces, where the real group $\SL(n,\RR)$ is replaced by $\SL(n,\CC)$.
	\begin{MainTheorem}
		\label{mthm:charSLCcontrVal}
		Suppose that $n\geq 3$. If $\Psi: \Conv(\CC^n, \RR) \rightarrow \Conv(\CC^n,\RR)$ is a continuous, dually epi-translation-invariant and $\SL(n, \CC)$-contra\-variant valuation, then $\Psi(f) \equiv c$ for some $c \in \RR$ and every $f \in \Conv(\CC^n,\RR)$.
	\end{MainTheorem}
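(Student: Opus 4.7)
My approach is to adapt the strategy the authors outline for Theorems~\ref{mthm:charSLRequivVal} and~\ref{mthm:charSLRcontrVal}. Viewing $\Conv(\CC^n,\RR)$ as $\Conv(\RR^{2n},\RR)$, the McMullen-type homogeneous decomposition applies to each real-valued valuation $f\mapsto\Psi(f)[x]$, giving $\Psi=\sum_{k=0}^{2n}\Psi_k$ with $\Psi_k(\cdot)[x]$ of degree $k$ in $f$. By uniqueness of the decomposition, each $\Psi_k$ inherits continuity, dual epi-translation-invariance and $\SL(n,\CC)$-contravariance. The degree-$0$ component $\Psi_0$ is therefore constant in $f$, and its value, a convex function on $\CC^n$, must be invariant under the contragredient action of $\SL(n,\CC)$; since this action is transitive on $\CC^n\setminus\{0\}$, it forces $\Psi_0\equiv c$ for some $c\in\RR$. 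It remains to prove that $\Psi_k\equiv 0$ for every $k\geq 1$.

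Fix $k\geq 1$ and $x_0\in\CC^n\setminus\{0\}$ and consider the real-valued valuation $\mu_{x_0,k}(f):=\Psi_k(f)[x_0]$. It is continuous, $k$-homogeneous and dually epi-translation-invariant on $\Conv(\CC^n,\RR)$, so it admits a compactly supported Goodey--Weil distribution $\GW(\mu_{x_0,k})$ on $\CC^n\cong\RR^{2n}$. Because $\SL(n,\CC)\subset\SL(2n,\RR)$ preserves the real Lebesgue measure, the Goodey--Weil distribution transforms by pull-back under $f\mapsto f\circ g$ for $g\in\SL(n,\CC)$. Combining this transformation rule with the contravariance identity $\Psi_k(f\circ g)[x_0]=\Psi_k(f)[g^{-T}x_0]$ yields that $\GW(\mu_{x_0,k})$ is invariant under the stabilizer
\[
H_{x_0}=\{g\in\SL(n,\CC):g^Tx_0=x_0\}
\]
acting on $\CC^n$ via the restriction of the standard representation.

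The heart of the proof is an orbit analysis. Taking $x_0=e_1$, any element of $H_{e_1}$ takes the form $g=\begin{pmatrix}1&0\\c&B\end{pmatrix}$ with $c\in\CC^{n-1}$ and $B\in\SL(n-1,\CC)$, acting on $(v_1,v')\in\CC\times\CC^{n-1}$ by $(v_1,v')\mapsto(v_1,cv_1+Bv')$. Since $n\geq 3$, the group $\SL(n-1,\CC)$ acts transitively on $\CC^{n-1}\setminus\{0\}$, so the $H_{e_1}$-orbit of any nonzero point is either $\{0\}\times(\CC^{n-1}\setminus\{0\})$ or a full slice $\{v_1\}\times\CC^{n-1}$ with $v_1\neq 0$; both orbit types are unbounded in $\CC^n$. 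As $\supp\GW(\mu_{e_1,k})$ is closed, $H_{e_1}$-invariant, and compact, it follows that $\supp\GW(\mu_{e_1,k})\subset\{0\}$.

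To conclude, I would invoke Theorem~\ref{thm:restrSuppGWRestrVal} applied to the zero-dimensional subspace $\{0\}\subset\CC^n$: it provides a continuous, dually epi-translation-invariant, $k$-homogeneous valuation $\tilde\mu$ on $\Conv(\{0\},\RR)\cong\RR$ with $\mu_{e_1,k}(f)=\tilde\mu(f(0))$. Dual epi-translation-invariance forces $\tilde\mu$ to be a constant, and $k$-homogeneity with $k\geq 1$ then forces this constant to vanish, so $\Psi_k(f)[e_1]=0$ for every $f$. The transitivity of the contragredient $\SL(n,\CC)$-action on $\CC^n\setminus\{0\}$ combined with the continuity of the finite convex function $\Psi_k(f)$ propagates the vanishing to every $x\in\CC^n$, and summing over $k$ yields $\Psi\equiv c$. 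The main technical obstacle will be to make the $\SL(n,\CC)$-transformation rule for the Goodey--Weil distribution fully precise and to justify the application of Theorem~\ref{thm:restrSuppGWRestrVal} for the zero-dimensional subspace; the orbit analysis itself is then an almost verbatim transcription of the one underlying the $n\geq 3$ case of Theorem~\ref{mthm:charSLRcontrVal}.
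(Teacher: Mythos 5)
Your argument is correct and tracks the paper's proof of Theorem~\ref{mthm:charSLCcontrVal} quite closely: both arguments use the invariance of the Goodey--Weil distribution under the stabilizer subgroup, an orbit analysis to force $\supp\Psi_x\subseteq\{0\}$, and then the restriction machinery (Theorem~\ref{thm:restrSuppGWRestrVal}, equivalently Corollary~\ref{cor:vconvRestrSuppNonHom}) to kill all homogeneous components of degree $\geq 1$. The only genuine difference is organizational: you first decompose $\Psi$ into homogeneous pieces and handle each $\Psi_k$ separately, and you work out the full orbit structure of $H_{e_1}$ in one pass, whereas the paper works directly with $\Psi_z$, first uses shears to get $\supp\Psi_z\subseteq z^\perp$ and then the embedded $\SL(n-1,\CC)$ to shrink to $\{0\}$, and only afterward invokes the $0$-homogeneity. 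Your single-pass orbit computation is slightly cleaner, but the content is the same.

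Two smaller points worth correcting. First, you write the contravariance rule as $\Psi_k(f\circ g)[x_0]=\Psi_k(f)[g^{-T}x_0]$ and take $H_{x_0}=\{g:g^Tx_0=x_0\}$, but the paper defines $\SL(n,\CC)$-contravariance via the \emph{conjugate} transpose: $\Psi(f\circ g)=\Psi(f)\circ g^{-*}$, so the relevant stabilizer is $\{g:g^*x_0=x_0\}$. For your chosen base point $x_0=e_1$ (a real vector) the two groups coincide, which is why your orbit computation is still valid, but for a general $x_0\in\CC^n$ they differ, so the slip should be fixed for the transformation rule and for the propagation step (the orbit $\{g^{-*}e_1: g\in\SL(n,\CC)\}$ is still all of $\CC^n\setminus\{0\}$, so nothing breaks). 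Second, the remark that $\SL(n,\CC)\subset\SL(2n,\RR)$ preserves Lebesgue measure is a red herring: the pull-back transformation rule for the Goodey--Weil distribution holds for any linear change of variables and has nothing to do with volume preservation.
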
	
	Here, a map $\Psi:\Conv(\CC^n,\RR)\rightarrow\Conv(\CC^n,\RR)$ is called $\SL(n,\CC)$-contra\-variant if $\Psi(f\circ g)=\Psi(f)\circ g^{-*}$ for $f\in\Conv(\CC^n,\RR)$ and $g\in \SL(n,\CC)$, where $g^{-*}$ denotes the conjugate transpose of $g^{-1}$.

	Finally, we mention an immediate corollary of Theorem~\ref{mthm:charSLRcontrVal}.
	\begin{MainCorollary}
		\label{mthm:charSLRequivValReal}
		If $\Psi: \Conv(\RR^n, \RR) \rightarrow \RR^n$ is a continuous, dually epi-translation-invariant and $\SL(n, \RR)$-equi\-variant valuation, then $\Psi(f) \equiv 0$, $f \in \Conv(\RR^n,\RR)$.
	\end{MainCorollary}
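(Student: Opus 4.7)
The plan is to reduce to Theorem~\ref{mthm:charSLRcontrVal} by promoting the vector-valued $\Psi$ to a function-valued valuation via the embedding $\RR^n\hookrightarrow\Conv(\RR^n,\RR)$, $v\mapsto\langle v,\cdot\rangle$. Concretely, I define
\[
\Phi\colon\Conv(\RR^n,\RR)\to\Conv(\RR^n,\RR),\qquad \Phi(f)(x):=\langle\Psi(f),x\rangle,
\]
so that each $\Phi(f)$ is linear, hence a finite convex function on $\RR^n$. The valuation property, dual epi-translation invariance, and continuity of $\Phi$ all transfer from the corresponding properties of $\Psi$ pointwise in $x$, since evaluation at a fixed $x$ is a continuous linear functional on $\RR^n$, and on $\Conv(\RR^n,\RR)$ epi-convergence coincides with pointwise convergence.

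The crucial step is to translate the equivariance of $\Psi$ into the contravariance required by Theorem~\ref{mthm:charSLRcontrVal}. With the conventional actions $g\cdot v=gv$ on $\RR^n$ and $(g\cdot f)(x)=f(g^{-1}x)$ on $\Conv(\RR^n,\RR)$, the $\SL(n,\RR)$-equivariance of $\Psi$ reads $\Psi(f\circ g)=g^{-1}\Psi(f)$, and a one-line computation gives
\[
\Phi(f\circ g)(x)=\langle g^{-1}\Psi(f),x\rangle=\langle\Psi(f),g^{-T}x\rangle=(\Phi(f)\circ g^{-T})(x).
\]
This is the only delicate step of the argument, since it requires fixing the group-action conventions unambiguously; once it is in place, Theorem~\ref{mthm:charSLRcontrVal} applies to $\Phi$ directly.

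For $n\ge 3$ the theorem forces $\Phi(f)$ to be constant in $x$; but a linear function on $\RR^n$ that is constant must vanish, so $\langle\Psi(f),x\rangle=0$ for every $x\in\RR^n$, and hence $\Psi(f)=0$. For $n=2$ the theorem gives
\[
\Phi(f)(x)=c+\int_{\RR\setminus\{0\}}\frac{f(s\vartheta x)-f(0)}{|s|^2}\,d\nu(s);
\]
evaluating at $x=0$ forces $c=0$, and testing against the quadratic $f(x)=|x|^2$ --- whose value is unchanged by the rotation $\vartheta$ --- collapses the integral to $|x|^2\,\nu(\RR\setminus\{0\})$, which can be linear in $x$ only if $\nu(\RR\setminus\{0\})=0$. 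In that case the integrand vanishes on the entire domain of integration, so $\Phi\equiv 0$ and $\Psi\equiv 0$.
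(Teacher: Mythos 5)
Your proof is correct and follows the same overall strategy as the paper: embed $\RR^n\hookrightarrow(\RR^n)^\ast\subset\Conv(\RR^n,\RR)$ via $v\mapsto\langle v,\cdot\rangle$, check that $\SL(n,\RR)$-equivariance of $\Psi$ becomes $\SL(n,\RR)$-contravariance of the induced $\Conv(\RR^n,\RR)$-valued valuation, and invoke Theorem~\ref{mthm:charSLRcontrVal}. The only genuine divergence is in the $n=2$ case: the paper eliminates the measure $\nu$ by exploiting the equality case of Jensen's inequality for an arbitrary strictly convex $f$, whereas you test the linear function $\langle\Psi(f),\cdot\rangle$ against the single rotation-invariant quadratic $f(x)=|x|^2$ and compare homogeneity degrees to force $\nu(\RR\setminus\{0\})=0$. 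Your variant is a bit more direct and equally valid, since the integral formula only involves $\nu$ restricted to $\RR\setminus\{0\}$, so $\nu(\RR\setminus\{0\})=0$ already makes the integral term vanish identically.
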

	An analogous statement for $\CC^n$ can be deduced for $n \geq 3$ from Theorem~\ref{mthm:charSLCcontrVal}.

	\section{Background material}
	In this section we review the necessary background on valuations on convex functions. We will keep the exposition short and introduce only the notions needed in this article. For more details see, e.g., \cite{Hofstaetter2023, Knoerr2020a, Colesanti2019b, Colesanti2020, Ludwig2023} and the references therein.
	
	\medskip
	
	
	Let $F$ be a real Hausdorff topological vector space. We denote by $\VConv(\RR^n, F)$ the space of all continuous (with respect to epi-conver\-gence), dually epi-translation-invariant valuations on $\Conv(\RR^n, \RR)$ with values in $F$. The subspace of $k$-homoge\-neous valuations, that is, all $\mu \in \VConv(\RR^n, F)$ such that $\mu(\lambda f) = \lambda^k \mu(f)$ for all $f \in \Conv(\RR^n, \RR)$ and $\lambda > 0$, is denoted by $\VConv_k(\RR^n, F)$. If $F=\RR$, we will just write $\VConv(\RR^n)$ resp. $\VConv_k(\RR^n)$. It was recently proved in \cite{Colesanti2019b} for $F=\RR$ and later by different methods in \cite{Knoerr2020a} for general $F$ that $\VConv(\RR^n, F)$ admits a homogeneous decomposition.	
	\begin{theorem}[\cite{Knoerr2020a, Colesanti2019b}]
		\label{thm:mcmullenVConv}
		Suppose that $F$ is a real Hausdorff topological vector space. Then
		\begin{align*}
			\VConv(\RR^n, F) = \bigoplus_{k=0}^n \VConv_k(\RR^n, F).
		\end{align*}
	\end{theorem}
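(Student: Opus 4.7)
The plan is to prove this in direct analogy with McMullen's homogeneous decomposition of translation-invariant valuations on convex bodies: for each fixed $f\in\Conv(\RR^n,\RR)$, establish that $t\mapsto\mu(tf)$ is a (vector-valued) polynomial in $t\geq 0$ of degree at most $n$, and then read off the homogeneous components via Vandermonde inversion.

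Concretely, I would first show that for every $\mu\in\VConv(\RR^n,F)$ and every $f\in\Conv(\RR^n,\RR)$, the map $P_f:[0,\infty)\to F$ defined by $P_f(t):=\mu(tf)$ is polynomial of degree at most $n$. The valuation axiom is compatible with non-negative scaling, since $t\max\{f,h\}=\max\{tf,th\}$ and $t\min\{f,h\}=\min\{tf,th\}$ for $t\geq 0$, and dual epi-translation invariance is likewise scale-compatible because $t\ell$ is affine whenever $\ell$ is; this reduces the analysis of $P_f$ to the subclass of piecewise affine convex functions, whose epigraphs are polytopal. Using the valuation property to decompose such epigraphs into pieces associated to their faces, one can mimic McMullen's inductive argument and show that $P_f$ is polynomial in $t$, the degree bound $n$ arising from the dimension of the underlying space. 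Continuity with respect to epi-convergence extends the polynomial identity to all of $\Conv(\RR^n,\RR)$; for general $F$ one uses that polynomiality of an $F$-valued map in $t$ is determined by finitely many values, reducing to the scalar setting, or proceeds intrinsically as in \cite{Knoerr2020a}.

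With polynomiality at hand, I would pick any $n+1$ distinct scalars $0<t_0<\dots<t_n$ and define
\begin{equation*}
	\mu_k(f):=\sum_{j=0}^n c_{k,j}\,\mu(t_j f),\qquad k=0,\dots,n,
\end{equation*}
where $(c_{k,j})$ is the inverse of the Vandermonde matrix $(t_j^k)_{k,j}$. Each $\mu_k$ is a finite linear combination of continuous, dually epi-translation-invariant valuations precomposed with the non-negative scaling $f\mapsto t_j f$, and hence is itself a continuous, dually epi-translation-invariant valuation; it is $k$-homogeneous because rescaling $f$ by $\lambda>0$ replaces $P_f(t)$ by $P_f(\lambda t)$, and Vandermonde inversion extracts exactly the $t^k$-coefficient. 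Evaluating the polynomial identity $\mu(tf)=\sum_{k=0}^n t^k\mu_k(f)$ at $t=1$ yields $\mu=\sum_k\mu_k$, and uniqueness of the decomposition follows because any alternative expression $\mu=\sum_k\tilde\mu_k$ with $\tilde\mu_k\in\VConv_k(\RR^n,F)$ reproduces the same polynomial expansion of $P_f$, forcing $\tilde\mu_k=\mu_k$.

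The main obstacle is clearly the polynomial expansion in the first step; the Vandermonde inversion and the verification of the properties of $\mu_k$ are essentially formal once polynomiality is known. The degree bound $n$ is the delicate point, and the cleanest route is to reduce to piecewise affine functions and carry out an induction on the number of maximal domains of linearity, which brings the problem into direct contact with McMullen's original polytope argument; this strategy is executed in \cite{Colesanti2019b} for $F=\RR$ and extended to arbitrary Hausdorff topological vector spaces $F$ in \cite{Knoerr2020a}.
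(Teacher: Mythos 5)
The paper cites this theorem as a background result from \cite{Colesanti2019b} (for $F=\RR$) and \cite{Knoerr2020a} (for general $F$) and does not supply its own proof, so there is no in-text argument to compare against. Your sketch faithfully reproduces the strategy of \cite{Colesanti2019b}: show that $t\mapsto\mu(tf)$ is a polynomial of degree at most $n$ (the genuinely hard step, approached via piecewise affine convex functions and an induction in the spirit of McMullen), then extract the homogeneous components by Vandermonde inversion and verify, as you do, that each $\mu_k$ is a continuous, dually epi-translation-invariant, $k$-homogeneous valuation. One inaccuracy to flag: your proposed shortcut for general $F$ --- testing the $F$-valued polynomial against finitely many values or against continuous linear functionals to reduce to the scalar case --- presupposes that $F'$ separates points of $F$. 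This holds for locally convex spaces, but Theorem~\ref{thm:mcmullenVConv} is stated for an arbitrary real Hausdorff topological vector space, where $F'$ may even be trivial, so the scalar reduction does not go through in general. This is precisely why the paper describes \cite{Knoerr2020a} as proving the general case ``by different methods'' rather than by extending the scalar argument; your closing sentence slightly misstates that relationship.
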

	
	Theorem~\ref{thm:mcmullenVConv} is equivalent to the (non-trivial) fact that for every $\mu \in \VConv(\RR^n, F)$ and $f_1, \dots, f_m \in \Conv(\RR^n, \RR)$, the map $(\lambda_1, \dots, \lambda_m) \to \mu(\lambda_1 f_1 + \dots + \lambda_m f_m)$ is a polynomial in $\lambda_i \geq 0$. For $k$-homogeneous $\mu$, the coefficients of this polynomial give rise to the \emph{polarization} $\bar{\mu}$ of $\mu$, also given by (see, e.g., \cite{Knoerr2020a}*{eq.~(1)})
	\begin{align*}
		\bar{\mu}(f_1,\dots,f_k)=\frac{1}{k!}\left.\frac{\partial^k}{\partial \lambda_1\dots\partial \lambda_k}\right|_0\mu\left(\sum_{j=1}^{k}\lambda_j f_j\right), \quad f_1, \dots, f_k \in \Conv(\RR^n, \RR).
	\end{align*}
	The map $\bar \mu$ is multi-linear in each component and thereby can be extended uniquely to differences of convex functions, and thus extends to a continuous multilinear functional on smooth function on $\RR^n$ with compact support. If $F$ is a locally convex vector space, that is, a Hausdorff topological vector space whose topology is generated by a family of semi-norms, this construction gives rise to a distribution on $(\RR^n)^k$, called the \emph{Goodey--Weil distribution} $\GW(\mu)$ of $\mu$. This notion is motivated by a similar construction due to Goodey and Weil \cite{Goodey1984}. In the following theorem, we list a few properties of the Goodey--Weil distributions established in \cite{Knoerr2020a}*{eq.~(3), Def.~5.3, Thm.~5.5, Thm.~5.7}. Here, we denote by $\overline{F}$ the completion of $F$ and by $\Distrib((\RR^n)^k, \overline{F})$ the space of distributions on $(\RR^n)^k$ with values in $\overline{F}$. We refer to \cite{Hoermander2003} for a background on distributions.
	\begin{theorem}[\cite{Knoerr2020a}]
		\label{thm:exGWdistrVConv}
		Let $F$ be a locally convex vector space that admits a continuous norm, and $1 \leq k \leq n-1$. For every $\mu \in \VConv_k(\RR^n, F)$, there exists a unique distribution $\GW(\mu) \in \Distrib((\RR^n)^k, \overline{F})$ with compact support and the following property: If $f_1, \dots, f_k \in \Conv(\RR^n, \RR) \cap C^\infty(\RR^n)$, then 
		\begin{align}\label{eq:defGWdistr}
			\GW(\mu)[f_1 \otimes \dots \otimes f_k] = \bar{\mu}(f_1, \dots, f_k).
		\end{align}
		Moreover, $\GW(\mu)$ has the following properties:
		\begin{enumerate}
			\item[(i)] If $k=1$, then $\GW(\mu)$ is of order at most $2$.
			\item[(ii)]\label{thm:suppGWDiag} The support of $\GW(\mu)$ is contained in the diagonal of $(\RR^n)^k$.
		\end{enumerate}
	\end{theorem}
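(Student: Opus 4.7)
The plan is to construct $\GW(\mu)$ in three stages: build the polarization $\bar\mu$ as a multilinear object on differences of convex functions, prove a continuity estimate of $C^2$-type, and invoke the Schwartz kernel theorem to realize $\bar\mu$ as a distribution with compact support.

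First I would use Theorem~\ref{thm:mcmullenVConv} to show that for $\mu\in \VConv_k(\RR^n,F)$, the map $(\lambda_1,\dots,\lambda_m)\mapsto\mu(\sum_j\lambda_j f_j)$ is a homogeneous polynomial of degree $k$ in $\lambda_j\geq 0$ (the valuation is purely $k$-homogeneous, and its values on non-negative linear combinations can be computed via the decomposition into homogeneous components of differences). The coefficient of $\lambda_1\cdots\lambda_k$ defines $\bar\mu(f_1,\dots,f_k)$, which is symmetric and positively multilinear on $\Conv(\RR^n,\RR)$. Because $\Conv(\RR^n,\RR)$ spans a real vector space when one passes to formal differences, this extends uniquely to a multilinear map $\bar\mu$ on differences of convex functions. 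Every $\phi\in C_c^\infty(\RR^n)$ admits such a decomposition $\phi=(\phi+Mq)-Mq$, with $q$ a fixed smooth convex function of quadratic growth and $M$ large enough to ensure convexity on $\supp\phi$ (outside a large ball one modifies by an affine function, which does not affect $\mu$), so $\bar\mu$ is defined on tensor products of smooth compactly supported functions.

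The main obstacle is a continuity estimate: for every compact $K\subset\RR^n$ there should exist a continuous seminorm $p$ on $F$ and $C>0$ with
\begin{equation*}
  p(\bar\mu(f_1,\dots,f_k))\leq C\prod_{j=1}^{k}\|f_j\|_{C^2(K)}
\end{equation*}
for smooth convex $f_j$ whose ``effective support'' (after subtracting the appropriate affine map, which is invisible to $\mu$ by dual epi-translation invariance) lies in $K$. This estimate would follow from a compactness argument: convex functions with uniform $C^2$-control on $K$ and $f(0)=0$, $\nabla f(0)=0$ form an equicontinuous family, so their epi-convergence closure in $\Conv(\RR^n,\RR)$ is compact, and the continuous map $\mu$ stays bounded there. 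Polarising $\mu$ on affine combinations of scaled copies of the $f_j$ then produces the claimed product bound on $\bar\mu$. Once the estimate is in place, $\bar\mu$ extends to a continuous multilinear functional on $C_c^\infty(\RR^n)^k$ with values in $F$, and a version of the Schwartz kernel theorem (applicable because the continuous norm on $F$ lets us embed into a Banach space and take the completion $\overline F$) produces a unique $\overline F$-valued distribution $\GW(\mu)\in\Distrib((\RR^n)^k,\overline F)$ satisfying \eqref{eq:defGWdistr}.

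It remains to verify compact support and properties (i), (ii). For compact support one uses that if all $\phi_j$ are supported outside a sufficiently large ball $B_R$, then the convex representatives of $\phi_j$ can be chosen affine on $B_R$, hence arbitrarily close to affine functions globally; dual epi-translation invariance together with continuity of $\mu$ forces $\bar\mu(\phi_1,\dots,\phi_k)=0$, so $\GW(\mu)$ is supported in a compact set determined by the seminorm in the continuity estimate. Property (i), order at most $2$ when $k=1$, is immediate from the $C^2$-estimate above. For (ii), I would exploit the valuation identity: if convex $f_i,f_j$ have disjoint effective supports then $\max\{f_i,f_j\}$ and $\min\{f_i,f_j\}$ decompose so that the polarisation coefficient in $\lambda_i\lambda_j$ vanishes; iterating this and passing to smooth test functions by density shows that $\GW(\mu)$ annihilates any tensor product $\phi_1\otimes\cdots\otimes\phi_k$ whose factors do not share a common point in their supports, which is precisely the statement that $\supp\GW(\mu)$ lies in the diagonal of $(\RR^n)^k$.
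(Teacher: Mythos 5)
The paper does not prove Theorem~\ref{thm:exGWdistrVConv}: it is quoted from \cite{Knoerr2020a}, as the sentence preceding it states explicitly (citing eq.~(3), Def.~5.3, Thm.~5.5, Thm.~5.7 of that reference). There is therefore no proof in this paper to compare against. Your overall skeleton --- polarize via Theorem~\ref{thm:mcmullenVConv}, establish a continuity estimate of $C^2$-type, apply a Schwartz-kernel argument, and derive diagonal support from the valuation identity --- does match the strategy of \cite{Knoerr2020a}, and the polarization step (writing $\phi=(\phi+Mq)-Mq$ with $q$ a convex quadratic) is sound.

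The genuine gap is in the compact support, which is the hardest ingredient being quoted. You argue that convex functions with a uniform $C^2$-bound on a compact set $K$, normalized at $0$, form a family whose closure in $\Conv(\RR^n,\RR)$ is compact, so that continuity of $\mu$ yields a bound and hence a product estimate with that fixed $K$. This is false: a convex function vanishing on $K$ can grow arbitrarily fast outside $K$ --- for instance $f_j(x)=j\max\{0,\|x\|-R\}$ with $K\subset B_R$ has no epi-convergent subsequence in $\Conv(\RR^n,\RR)$ --- so the family is not precompact and continuity of $\mu$ alone gives no bound on it. Consequently you cannot deduce the estimate $p(\bar\mu(f_1,\dots,f_k))\le C\prod_j\|f_j\|_{C^2(K)}$ with a single \emph{fixed} $K$; but having such a fixed $K$ is trivially equivalent to compact support (if $\supp\phi_j\cap K=\emptyset$ the bound reads $0$), so the argument as written is circular. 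Your fallback remark --- that a $\phi$ supported outside $B_R$ has convex representatives ``close to affine globally'' --- is also incorrect, since $\phi+Mq$ and $Mq$ with $q$ quadratic are nowhere near affine. Producing the fixed compact set is the hard content of Thm.~5.5 in \cite{Knoerr2020a}, proved by a contradiction argument that plays off a putative sequence of bumps escaping to infinity (each detected by $\GW(\mu)$) against continuity of $\mu$, and it is precisely where the hypothesis that $F$ admits a continuous norm is used. Finally, your treatment of (ii) is the right idea but not a proof: one must actually write down the inclusion--exclusion identity for $q$, $q+\lambda_i\phi_i$, $q+\lambda_j\phi_j$, $q+\lambda_i\phi_i+\lambda_j\phi_j$ valid when $\supp\phi_i\cap\supp\phi_j=\emptyset$ and verify that the coefficient of $\lambda_i\lambda_j$ cancels; ``$\max$ and $\min$ decompose so the mixed coefficient vanishes'' is an assertion, not an argument.
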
 
	The relation of $\GW(\mu)$ to $\mu$ in \eqref{eq:defGWdistr} and Theorem~\ref{thm:exGWdistrVConv}(ii) motivate the following definition.
	\begin{definition}[\cite{Knoerr2020a}]
		Let $\mu \in \VConv_k(\RR^n, F)$, $k\ge 1$. The support of $\mu$ is defined by $\supp \mu = \Delta^{-1}_k(\supp \GW(\mu))$. Here $\Delta_k:\RR^n\rightarrow(\RR^n)^k$, $x\mapsto (x,\dots,x)$, denotes the diagonal embedding. \\
		If $\mu\in\VConv(\RR^n,F)$ and $\mu=\sum_{k=0}^{n}\mu_k$ is the decomposition into its homogeneous components, then the support of $\mu$ is the set $\supp\mu=\bigcup_{k=1}^n\supp\mu_k$.
	\end{definition}
	It was shown in \cite[Prop.~6.3]{Knoerr2020a} that $\mu(f)$ only depends on the values of $f\in\Conv(\RR^n,\RR)$ on a neighborhood of $\supp \mu$.

	\medskip
	
	We now turn to valuations with values in $\Conv(\RR^n, \RR)$. Let $\Psi: \Conv(\RR^n, \RR) \to \Conv(\RR^n, \RR)$ be a continuous, dually epi-translation-invariant valuation. For $x \in \RR^n$, we define
	\begin{align*}
		\Psi_x(f) = \Psi(f)[x], \quad f \in \Conv(\RR^n,\RR).
	\end{align*}
	Clearly, $\Psi_x \in \VConv(\RR^n)$, and $\Psi_x$ is $k$-homogeneous if $\Psi$ is. Moreover, any equivariance of $\Psi$ directly translates to an invariance of $\Psi_x$, described in the following simple lemma.
	\begin{lemma}\label{lem:equivPropGWdistr}
		Suppose that $\Psi: \Conv(\RR^n,\RR) \to \Conv(\RR^n,\RR)$ is a continuous, dually epi-translation-invariant valuation, and let $G\subseteq \GL(n,\RR)$ be a subgroup.
		
		\begin{enumerate}
			\item If $\Psi$ is $G$-equivariant, then $\supp\Psi_x$ is invariant under $G_x$, the stabilizer of $x$ in $G$.
			\item If $\Psi$ is $G$-equivariant and $k$-homogeneous, then for every $x \in \RR^n$ and $\eta \in G$,
			\begin{align*}
				\GW(\Psi_{\eta(x)})[\varphi] = \left(\eta \cdot \GW(\Psi_x)\right)[\varphi] = \GW(\Psi_x)[\eta^{-1}\cdot\varphi], \quad \varphi \in C^\infty_c((\RR^n)^k).
			\end{align*}
			In particular, $\GW(\Psi_x)$ is invariant under $G_x$. Here, $G$ operates on $(\RR^n)^k$ by the diagonal action.
		\end{enumerate}
	\end{lemma}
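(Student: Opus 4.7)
The plan is to establish (2) first and then deduce (1) from it via Theorem~\ref{thm:mcmullenVConv}. For (2), the key observation is that evaluating the $G$-equivariance identity $\Psi(f\circ\eta)=\Psi(f)\circ\eta$ at a point $x\in\RR^n$ yields $\Psi_x(f\circ\eta)=\Psi_{\eta(x)}(f)$ for all $f\in\Conv(\RR^n,\RR)$ and $\eta\in G$. Uniqueness of the polarization and its multi-linearity then push this through to the identity $\overline{\Psi_{\eta(x)}}(f_1,\dots,f_k)=\overline{\Psi_x}(f_1\circ\eta,\dots,f_k\circ\eta)$. Specializing to smooth $f_j$ and using the defining relation \eqref{eq:defGWdistr} translates this into the assertion that $\GW(\Psi_{\eta(x)})$ and $\GW(\Psi_x)$ differ by precomposition with the diagonal map $(y_1,\dots,y_k)\mapsto(\eta y_1,\dots,\eta y_k)$, at least on tensor products $f_1\otimes\dots\otimes f_k$ of smooth compactly supported functions. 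Since such tensor products span a sequentially dense subspace of $C^\infty_c((\RR^n)^k)$, continuity of the Goodey--Weil distributions allows me to extend the identity to arbitrary test functions, producing exactly the formula in the lemma once the standard convention $(\eta\cdot T)[\varphi]=T[\eta^{-1}\cdot\varphi]$ is unwound. Specializing to $\eta\in G_x$ then gives the $G_x$-invariance of $\GW(\Psi_x)$.

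For (1), I decompose $\Psi=\sum_{k=0}^n\Psi_k$ according to Theorem~\ref{thm:mcmullenVConv}. Uniqueness of this decomposition together with the polynomial identity $\Psi(\lambda(f\circ\eta))=\Psi(\lambda f)\circ\eta$, viewed in $\lambda\geq 0$, forces each $\Psi_k$ to be $G$-equivariant on its own. Applying~(2) to $\Psi_k$ for $k\geq 1$ shows that $\GW((\Psi_k)_x)$, and hence also its support, is $G_x$-invariant. Since the diagonal embedding $\Delta_k$ intertwines the $G_x$-action on $\RR^n$ with the diagonal $G_x$-action on $(\RR^n)^k$, the $G_x$-invariance passes to $\supp(\Psi_k)_x=\Delta_k^{-1}(\supp\GW((\Psi_k)_x))$. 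Taking the union over $k=1,\dots,n$ yields the $G_x$-invariance of $\supp\Psi_x$.

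No step in this scheme is a real obstacle; the argument is essentially bookkeeping about how the various constructions respect group actions. The only points deserving some attention are keeping the conventions for the action of $G$ on test functions versus distributions consistent (the factor of $\eta$ versus $\eta^{-1}$), and checking that both the polarization formula and the McMullen decomposition commute with precomposition by $\eta\in G$, both of which are immediate from the respective uniqueness statements.
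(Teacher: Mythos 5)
Your proposal is correct and follows essentially the same line as the paper's own (very terse) proof: the paper simply asserts that claim~(2) ``follows directly from the relation of $\GW(\Psi_x)$ to the polarization of $\Psi_x$'' and that claim~(1) follows by applying~(2) to the homogeneous components; you are spelling out exactly that bookkeeping. One small imprecision: when you translate the polarization identity into a statement about the Goodey--Weil distributions, you invoke~\eqref{eq:defGWdistr} ``on tensor products $f_1\otimes\dots\otimes f_k$ of smooth compactly supported functions,'' but that defining relation is stated for smooth \emph{convex} $f_j$, which are not compactly supported. The gap is bridged by the standard extension step (also used implicitly in the paper): the multilinear polarization extends to differences of smooth convex functions, hence to all of $C^\infty_c(\RR^n)$, after which tensor products do span a dense subspace and continuity of the compactly supported distributions finishes the argument; alternatively, one can appeal directly to the uniqueness clause of Theorem~\ref{thm:exGWdistrVConv}. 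With that clarification, the rest (evaluating equivariance pointwise, polynomial identity in $\lambda\ge 0$ forcing each homogeneous component to be $G$-equivariant, and $\Delta_k$ intertwining the $G_x$-actions) is exactly right.
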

	\begin{proof}
		The second claim follows directly from the relation of $\GW(\Psi_x)$ to the polarization of $\Psi_x$ in Theorem~\ref{thm:exGWdistrVConv}. The first claim follows by applying the second claim to the different homogeneous components of $\Psi_x$.
	\end{proof}
	We will use Lemma~\ref{lem:equivPropGWdistr} in the proof of the main results to show that the supports of $\Psi_x$ have to be lower-dimensional for $\SL(n,\RR)$-equi-/contravariant $\Psi$.

	\medskip

	Previously, a variant of Lemma~\ref{lem:equivPropGWdistr} was used in \cite{Hofstaetter2023} to characterize all additive, $\GL(n,\RR)$-equivariant endomorphisms of $\Conv(\RR^n, \RR)$. Here, an endomorphism $\Psi$ is called \emph{dually translation-invariant} if $\Psi(f + \ell) = \Psi(f)$, $f \in \Conv(\RR^n, \RR)$, for all linear maps $\ell:\RR^n\rightarrow\RR$.
	\begin{theorem}[\cite{Hofstaetter2023}]\label{mthm:CharGLEquivEndo}
		A map $\Psi: \Conv(\RR^n,\RR) \rightarrow \Conv(\RR^n,\RR)$ is continuous, additive and $\GL(n, \RR)$-equi\-variant if and only if there exists $c \in \RR$ and $\nu\in\MeasC^+(\RR)$ with $\int_{\RR\setminus\{0\}}|s|^{-1} d\nu(s) < \infty$ such that
		\begin{align}\label{eq:MainThmGLEquiv}
			\Psi(f)[x] = cf(0) + \int_{\RR\setminus\{0\}} \frac{f(sx)-f(0)}{|s|^2} d\nu(s), \quad x \in \RR^n,
		\end{align}
		for every $f \in \Conv(\RR^n,\RR)$. Moreover, the map $\Psi$ defined by \eqref{eq:MainThmGLEquiv} is dually translation-invariant if and only if $\int_{\RR\setminus\{0\}} s^{-1} d\nu(s) = 0$.
	\end{theorem}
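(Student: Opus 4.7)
The easy direction is to verify directly that any $\Psi$ of the form \eqref{eq:MainThmGLEquiv} satisfies the required properties. Convexity of $\Psi(f)$ holds because each integrand $x\mapsto(f(sx)-f(0))/|s|^{2}$ is convex in $x$ and $\nu$ is non-negative; finiteness of the integral is controlled by the local Lipschitz bound $|f(sx)-f(0)|\le L(K)|s||x|$ (uniform in $s$ on the compact $\supp\nu$ and $x$ on any compact $K$) combined with $\int|s|^{-1}d\nu<\infty$. Additivity is built into the formula, $\GL(n,\RR)$-equivariance follows from $g(0)=0$ and $g(sx)=sg(x)$, and continuity under epi-convergence (equivalent on $\Conv(\RR^n,\RR)$ to locally uniform convergence) is a dominated convergence argument. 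Finally, evaluating \eqref{eq:MainThmGLEquiv} on a linear $\ell(x)=\langle v,x\rangle$ yields $\Psi(\ell)(x)=\ell(x)\int s^{-1}d\nu$, so dual translation invariance is equivalent to $\int s^{-1}d\nu=0$.

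\textbf{Necessity: reduction to dimension one.} For the other direction, I would study the pointwise functionals $\Psi_x(f):=\Psi(f)(x)$, each of which is continuous and additive. Since $g(0)=0$ for all $g\in\GL(n,\RR)$, equivariance makes $\Psi_0$ fully $\GL(n,\RR)$-invariant; applying the scaling $g_t=t\cdot\id$ and letting $t\to 0^+$ gives $f\circ g_t\to f(0)\mathbbm{1}$ in epi-convergence, so continuity forces $\Psi_0(f)=f(0)\Psi_0(\mathbbm{1})=:cf(0)$. Next, $\Psi_{e_1}$ is invariant under the stabilizer $\GL(n,\RR)_{e_1}$; applying $h_t=\mathrm{diag}(1,t,\dots,t)\in\GL(n,\RR)_{e_1}$ and letting $t\to 0^+$ collapses $f$ to its restriction along $\RR e_1$, so $\Psi_{e_1}(f)=T(\tilde f)$ with $\tilde f(t):=f(te_1)$ and $T:\Conv(\RR,\RR)\to\RR$ continuous and additive. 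For $x\neq 0$, transitivity of $\GL(n,\RR)$ on $\RR^n\setminus\{0\}$ lets me choose $g$ with $g(e_1)=x$, and equivariance gives $\Psi(f)(x)=T(t\mapsto f(tx))$; combined with $\Psi(f)(0)=cf(0)$ this reduces the whole problem to classifying $T$.

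\textbf{The one-dimensional classification --- the main obstacle.} The remaining task, which I expect to be the hardest, is to show that any continuous additive $T:\Conv(\RR,\RR)\to\RR$ for which $x\mapsto T(t\mapsto f(tx))$ is convex and finite on $\RR^n$ for every $f\in\Conv(\RR^n,\RR)$ has the form $T(\phi)=c\phi(0)+\int_{\RR\setminus\{0\}}\frac{\phi(s)-\phi(0)}{|s|^{2}}d\nu(s)$ with $\nu\in\MeasC^+(\RR)$ satisfying $\int|s|^{-1}d\nu<\infty$. My plan is to use the fact that every $\phi\in\Conv(\RR,\RR)$ is determined by $\phi(0)$, $\phi'_\pm(0)$, and the non-negative Radon measure $\phi''$; continuity and additivity of $T$ then represent $T$ as a compactly supported distribution of order at most two on $\RR$. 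Subtracting $c\phi(0)$ yields a distribution $S:=T-c\cdot\mathrm{ev}_0$ vanishing on constants. The constraint that $x\mapsto S(t\mapsto f(tx))$ be convex on $\RR^n$ (and vanish at $x=0$) for every $f$ should kill any contribution of $S$ coming from derivatives concentrated at $0$ (which would produce non-convex outputs under scaling $f(tx)$), and force the remaining part of $S$ to act via a non-negative weight on the second-derivative measure of $\phi$; the substitution $s=1/t$ then converts that weight into the density $|s|^{-2}d\nu(s)$, while the moment integrability $\int|s|^{-1}d\nu<\infty$ is dictated by finiteness on linear test functions. Assembling these pieces with the constant contribution produces \eqref{eq:MainThmGLEquiv}, and the dual translation invariance addendum follows by comparing with the sufficiency computation for linear $\ell$.
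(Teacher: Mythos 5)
Theorem~\ref{mthm:CharGLEquivEndo} is not proved in this paper: it is quoted verbatim from \cite{Hofstaetter2023} and used as a black box in the proofs of Theorems~\ref{mthm:charSLRequivVal} and \ref{mthm:charSLRcontrVal}. There is therefore no internal proof to compare you against, so I can only assess your attempt on its own merits. Your sufficiency check is fine, and your reduction to a single continuous additive functional $T$ on $\Conv(\RR,\RR)$ via the degenerating maps $h_t=\operatorname{diag}(1,t,\dots,t)\to 0$ in the stabilizer of $e_1$, combined with transitivity, is correct and is a legitimate alternative to the Goodey--Weil support machinery that the present paper deploys for the analogous $\SL(n,\RR)$-statement.

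The gap is in the one-dimensional classification, which you yourself flag as the main obstacle, and the one concrete reason you offer for the crucial exclusion is wrong. You claim that convexity of $x\mapsto S(t\mapsto f(tx))$ ``should kill any contribution of $S$ coming from derivatives concentrated at $0$ (which would produce non-convex outputs under scaling $f(tx)$).'' It would not: a term $a\,\delta_0''$ with $a\ge 0$ sends a smooth $f$ to $\Psi(f)[x]=a\,\langle x,\nabla^2 f(0)\,x\rangle$, which is convex, additive and $\GL(n,\RR)$-equivariant, and a $\delta_0'$-type term gives the linear, hence convex, map $x\mapsto -\langle\nabla f(0),x\rangle$. What actually excludes a genuine $\delta_0''$-contribution is \emph{continuity} under epi-convergence, not convexity: the functional $\phi\mapsto\phi''(\{0\})$ (or a second divided difference at $0$) is not epi-continuous (e.g.\ $\phi_j\to|t|$ with $\phi_j$ smooth), and the surviving $\phi'(0)$-type contribution is not a separate term at all but is already encoded in the formula as $\phi'(0)\int_{\RR\setminus\{0\}} s^{-1}\,d\nu$, since $\int|s|^{-1}\,d\nu<\infty$. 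Conversely, the thing convexity really buys, which your sketch does not supply, is the \emph{positivity} of $\nu$ together with the fact that the restriction of the distribution to $\RR\setminus\{0\}$ is a measure at all: one must show $g(u):=T\bigl((\cdot-u)_+\bigr)$ is convex on $(0,\infty)$ and on $(-\infty,0)$ (e.g.\ by differentiating $r\mapsto r\,g(s/r)=\Psi\bigl(\max(0,\langle e_1,\cdot\rangle - s)\bigr)[re_1]$ twice), identify $g''$ on $\RR\setminus\{0\}$ with a nonnegative compactly supported measure $m$, set $d\nu=s^2\,dm$, and extract the moment bound $\int|s|^{-1}\,d\nu<\infty$ from finiteness of $g(0^\pm)=T(t_\pm)$ before reassembling $T$ from $c\phi(0)$, $T(\id)\phi'(0)$ and $\int g\,\phi''$. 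None of this is in your sketch, and the substitution ``$s=1/t$'' you invoke does not appear to be the correct change of variables. As written, the heart of the theorem remains unproved.
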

	As the space of dually epi-translation-invariant additive endomorphisms of finite convex functions coincides with the space of $1$-homogeneous, dually epi-translation-invariant valuations, Theorem~\ref{mthm:CharGLEquivEndo} is an important building block in the proof of Theorem~\ref{mthm:charSLRequivVal}. Indeed, the main contribution of this article is to reduce the problem of characterizing equivariant valuations to the study of additive endomorphisms.

	\section{Preparations}
	In this section, we prove the main tool for the proofs of the main results. Informally speaking, we show that the dimension of the support of a valuation on $\Conv(\RR^n, \RR)$ yields a bound on the degrees of its homogeneous components.
	
	\begin{theorem}\label{thm:restrSuppGWRestrVal}
		Suppose that $0 \leq k \leq n$ and that $F$ is a locally convex vector space.
		
		If $\mu \in \VConv_k(\RR^n,F)$ satisfies $\supp \mu \subseteq E$ for some $E \in \Grass_i(\RR^n)$, $0 \leq i \leq n-1$, then there exists $\mu_E\in\VConv_k(E, F)$, such that
		\begin{align}\label{eq:RestrSuppStatement}
			\mu(f) = \mu_E(f|_E) 
		\end{align}
		for all $f \in \Conv(\RR^n, \RR)$. In particular, $\mu = 0$ whenever $k > i$.
	\end{theorem}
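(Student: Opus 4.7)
The plan is to define $\mu_E$ via precomposition with the orthogonal projection onto $E$ and reduce the identity \eqref{eq:RestrSuppStatement} to a scaling invariance of $\mu$ in directions transverse to $E$. I focus on the nontrivial range $1\leq k\leq n-1$; for $k=0$ the valuation is a constant and the statement is immediate, while the $k=n$ case is handled separately via the homogeneous decomposition (Theorem~\ref{thm:mcmullenVConv}) on $E$. Let $\pi_E:\RR^n\to E$ denote the orthogonal projection and set $\mu_E(g):=\mu(g\circ\pi_E)$ for $g\in\Conv(E,\RR)$. The map $g\mapsto g\circ\pi_E$ is continuous with respect to epi-convergence, commutes with pointwise $\max$, $\min$, and scalar multiplication, and sends affine functions on $E$ to affine functions on $\RR^n$. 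Together with the corresponding properties of $\mu$, this gives $\mu_E\in\VConv_k(E,F)$. The ``in particular'' claim then follows at once from \eqref{eq:RestrSuppStatement} and Theorem~\ref{thm:mcmullenVConv} applied on $E$: whenever $k>i$, $\VConv_k(E,F)=0$, so $\mu_E=0$ and hence $\mu\equiv 0$.

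For the identity \eqref{eq:RestrSuppStatement} itself, consider for $\lambda>0$ the linear automorphism $A_\lambda:=\pi_E+\lambda(\id-\pi_E)$, which fixes $E$ pointwise and dilates $E^\perp$ by $\lambda$, and set $f_\lambda:=f\circ A_\lambda\in\Conv(\RR^n,\RR)$. Then $f_1=f$, and $f_\lambda(x)\to f(\pi_E x)$ pointwise on $\RR^n$ as $\lambda\to 0^+$. Since pointwise and epi-convergence coincide on $\Conv(\RR^n,\RR)$, continuity of $\mu$ yields $\mu(f_\lambda)\to\mu(f|_E\circ\pi_E)=\mu_E(f|_E)$. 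Hence \eqref{eq:RestrSuppStatement} will follow once it is shown that $\lambda\mapsto\mu(f_\lambda)$ is constant on $(0,\infty)$.

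This invariance is the technical core, to be established via the Goodey--Weil distribution. For smooth $f$, polarization yields
\[
\mu(f_\lambda)=\GW(\mu)[f^{\otimes k}\circ A_\lambda^{\otimes k}]=\bigl((A_\lambda^{\otimes k})^{*}\GW(\mu)\bigr)[f^{\otimes k}].
\]
Since $A_\lambda$ fixes $E$ pointwise and $\supp\GW(\mu)\subseteq\Delta_k(E)$, the pulled-back distribution is again supported in $\Delta_k(E)$; however a distributional derivative of order $m$ along $E^\perp$ in $\GW(\mu)$ would rescale by $\lambda^m\neq 1$ under this pullback. Thus the required invariance is equivalent to the assertion that $\GW(\mu)$ is the pushforward, under the inclusion $E^k\hookrightarrow(\RR^n)^k$, of some distribution on $E^k$, i.e.\ that $\GW(\mu)$ has no distributional derivatives transverse to $E^k$. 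Density of smooth convex functions in $\Conv(\RR^n,\RR)$ and continuity of $\mu$ then extend $\mu(f_\lambda)=\mu(f)$ to arbitrary $f\in\Conv(\RR^n,\RR)$, completing \eqref{eq:RestrSuppStatement}.

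The main obstacle is this rigidity step. The support condition alone permits normal derivatives, and it is the continuity of $\mu$ in the very weak epi-convergence topology that must be exploited to exclude them. The natural strategy is a contradiction argument: for any putative transverse component of $\GW(\mu)$, one constructs a sequence of smooth convex functions epi-converging to a common limit whose normal derivatives along $E$ do not (a convex prototype is $f_m(x)=\sqrt{x^2+1/m}\to|x|$, whose right derivative at $0$ jumps from $0$ to $1$), so that the contribution of that component forces $\mu(f_m)$ to fail to converge, contradicting continuity.
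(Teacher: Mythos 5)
Your proposal correctly isolates the crux — that one must show $\GW(\mu)$ has no distributional derivatives transverse to $E^k$, equivalently that $\GW(\mu)$ is a pushforward from $E^k$ — and the reformulation via the anisotropic scaling $A_\lambda=\pi_E+\lambda(\id-\pi_E)$ is a clean way to see this. However, you then explicitly label that step ``the main obstacle'' and only offer a strategy sketch rather than an argument, so the proof is not complete as written. This is not a minor omission: it is precisely the technical content of the theorem, and the paper's entire proof consists of carrying it out.

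Two concrete issues make the sketch insufficient even as a sketch. First, the prototype family $\sqrt{x_n^2+1/m}\to|x_n|$ is even in $x_n$, so its first normal derivative at $x_n=0$ vanishes; it therefore only detects the \emph{second}-order transverse term, not the first-order one. Ruling out the first-order term requires an asymmetric family — the paper uses $\varphi'(x')\sqrt{(x_n\pm\varepsilon)^2+\varepsilon^2}$, whose two one-sided limits force the first-order coefficient to equal both $+\tfrac{1}{\sqrt 2}u_1(\varphi')$ and $-\tfrac{1}{\sqrt 2}u_1(\varphi')$, hence to vanish. Second, you work at general homogeneity $k$, but the bound ``$\GW(\mu)$ has order at most $2$'' from Theorem~\ref{thm:exGWdistrVConv}(i) is only stated for $k=1$; the paper first polarizes to reduce to $k=1$ (the polarization $\bar\mu$ is a $1$-homogeneous valuation in each argument, still supported in $E$), which is what makes the Hörmander structure theorem produce only finitely many (namely three) transverse terms $u_0,u_1,u_2$. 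Without this reduction, you would face an a priori unbounded (though finite) transverse order and would need a test-function family adapted to each order, along with a convexity argument that such functions can be embedded into convex ones. You would also want to record the reduction along the codimension (the paper inducts to $\dim E=n-1$) and to check that the compensating convex functions $f_\varepsilon$ really make $f_\varepsilon+\psi_\varepsilon$ convex; the paper does this by explicit Hessian estimates. In short, the route you propose is essentially the paper's route, but the decisive analytic step is missing.
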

	\begin{proof}
		First, note that the claim is equivalent to
		\begin{align*}
			\mu(f) = \mu(\pi_E^\ast (f|_E)), \quad \text{for all}~f \in \Conv(\RR^n,\RR),
		\end{align*}
		where $\pi_E:\RR^n\rightarrow E$ is the orthogonal projection and $\pi_E^\ast g = g \circ \pi_E$, $g \in C(E)$, since we can then set $\mu_E(f):=\mu((\pi_E^*f)|_E)$ for $f\in\Conv(E,\RR)$. Since the topological dual $F'$ separates points in $F$, it is furthermore sufficient to establish this equation after composing $\mu$ with a continuous linear functional in $F'$. In particular, we may assume that $F=\RR$.
		
		Next, observe that we can restrict ourselves to the case $k=1$. Indeed, assume the case $k=1$ is proven and let $\mu \in \VConv_k(\RR^n)$ with $k>1$ be given such that $\supp \mu \subseteq E$ for some $E \in \Grass_i(\RR^n)$. Then its polarization $\bar{\mu}$ is a $1$-homogeneous, dually epi-translation-invariant valuation in each argument, which still has support in $E$. Consequently, we may apply the case $k=1$ to deduce
		\begin{align*}
			\bar{\mu}(f_1,\dots,f_k)=\bar{\mu}(\pi_E^*(f_1|_E),\dots,\pi_E^*(f_k|_E))
		\end{align*}
		for all $f_1,\dots,f_k\in\Conv(\RR^n,\RR)$. In particular,
		\begin{align*}
			\mu(f)=\bar{\mu}(f,\dots,f)=\bar{\mu}(\pi_E^*(f|_E),\dots,\pi_E^*(f|_E))=\mu(\pi_E^*(f|_E)),\quad\forall f\in\Conv(\RR^n,\RR).
		\end{align*}
		Moreover, using a direct induction argument, we can assume that $i=\dim E=n-1$.
%
%
		
		\medskip
		
		We are thus left to prove the claim for $k=1$ and $i=n-1$. Assuming w.l.o.g. that $E = \RR^{n-1} \times \{0\}$ and writing $x=(x',x_n)$ for $x \in \RR^n$, we have that $\supp \GW(\mu) = \supp \mu \subseteq \{x_n=0\}$, and, by \cite{Hoermander2003}*{Thm.~2.3.5}, there exist distributions $u_\alpha \in \DistribC(\RR^{n-1})$ of order at most $2-\alpha$, $\alpha = 0, 1, 2$, having compact support, such that
		\begin{align}\label{eq:prfRestrValToRkGW1}
			\GW(\mu)[\varphi] = u_0(\varphi(\cdot, 0)) + u_1((\partial_{x_n} \varphi)(\cdot, 0)) + u_2((\partial_{x_n}^2 \varphi)(\cdot, 0)),
		\end{align}
		for all $\varphi \in C^\infty_c(\RR^n)$. Here, we used that the order of $\GW(\mu)$ is at most $2$, compare Theorem~\ref{thm:exGWdistrVConv}. We need to show that $u_1 \equiv 0 \equiv u_2$.
		
		Let $\varphi' \in C^\infty_c(\RR^{n-1})$ be arbitrary and consider for $\varepsilon > 0$ the function
		\begin{align*}
			\psi_\varepsilon(x',x_n) = \varphi'(x')\sqrt{x_n^2 + \varepsilon^2}, \quad (x',x_n) \in \RR^n.
		\end{align*}
		Note that
		\begin{align*}
			\left.\partial_{x_n}^2 \psi_\varepsilon(x',x_n)\right|_{x_n = 0} = \varphi'(x')\left( -\frac{x_n^2}{\sqrt{x_n^2 + \varepsilon^2}^3} + \frac{1}{\sqrt{x_n^2 + \varepsilon^2}} \right)_{x_n = 0}  = \varphi'(x')\frac{1}{\varepsilon}
		\end{align*}
		and that the Hessian of $\psi_\varepsilon$ is given by
		\begin{align*}
			D^2 \psi_\varepsilon = (D^2 \varphi') \sqrt{x_n^2 + \varepsilon^2} + \nabla \varphi' \otimes \nabla \sqrt{x_n^2 + \varepsilon^2} + \nabla \sqrt{x_n^2 + \varepsilon^2} \otimes \nabla \varphi' \\+ \varphi' \mathrm{diag}\left(0, \dots, 0, -\frac{x_n^2}{\sqrt{x_n^2 + \varepsilon^2}^3} + \frac{1}{\sqrt{x_n^2 + \varepsilon^2}}\right).
		\end{align*}
		The norm of the first term is bounded on $\RR^{n-1}\times(-1,1)$ by $2\|\varphi'\|_{C^2_b(\RR^{n-1})}$ and grows slower than $2|x_n|\|\varphi'\|_{C^2_b(\RR^{n-1})}$ outside, for $\varepsilon$ small enough. The second and the third term can be bounded globally by $2\|\varphi'\|_{C^2_b(\RR^{n-1})}$, as $\sqrt{x_n^2 + \varepsilon^2}$ is Lipschitz-continuous with Lipschitz constant $1$.
		
		Now, let $C_1 = 4\|\varphi'\|_{C^2_b(\RR^{n-1})}$, $C_2 = \|\varphi'\|_{C^2_b(\RR^{n-1})}$ and $C_3 = \|\varphi'\|_\infty$, and, for $\varepsilon \geq 0$,
		\begin{align*}
			f_\varepsilon(x',x_n) = C_1 \frac{\|x\|^2}{2} + C_2 \|x\|^4 + C_3 \sqrt{x_n^2 + \varepsilon^2}, \quad (x',x_n) \in \RR^n.
		\end{align*}
		Then the Hessian of $f_\varepsilon + \psi_\varepsilon$ is positive semi-definite by the choices of $C_1, C_2$ and $C_3$, that is, $f_\varepsilon + \psi_\varepsilon$ is convex for $\varepsilon > 0$ small enough. Moreover, $f_\varepsilon + \psi_\varepsilon \to f_0 + \varphi' |x_n|$ uniformly on compact sets as $\varepsilon \to 0^+$, and hence $f_0 + \varphi' |x_n|$ is convex as well.
		Since $\psi_\varepsilon$ is smooth for $\varepsilon > 0$ and $\GW(\mu)$ has compact support, we can calculate
		\begin{align}\label{eq:prfRestrThmSuppEvalPsieps}
			\GW(\mu)[\psi_\varepsilon] = \mu(f_\varepsilon + \psi_\varepsilon) - \mu(f_\varepsilon), \quad \varepsilon > 0,
		\end{align}
		where the right-hand side converges to $\mu(f_0 + \varphi' |x_n|) - \mu(f_0)$ as $\varepsilon \to 0^+$. By \eqref{eq:prfRestrValToRkGW1}, the left-hand side of \eqref{eq:prfRestrThmSuppEvalPsieps} is given by
		\begin{align*}
			\GW(\mu)[\psi_\varepsilon] &= \varepsilon u_0(\varphi') + \left.\frac{x_n}{\sqrt{x_n^2 + \varepsilon^2}}\right|_{x_n=0} \!\!\!\!\!\!\!\!\!u_1(\varphi') + \left(-\frac{x_n^2}{\sqrt{x_n^2 + \varepsilon^2}^3} + \frac{1}{\sqrt{x_n^2 + \varepsilon^2}}\right)_{x_n=0} \!\!\!\!\!\!\!\!\!u_2(\varphi') \\
			&= \varepsilon u_0(\varphi') + \frac{1}{\varepsilon} u_2(\varphi'),
		\end{align*}
		which diverges for $\varepsilon \to 0$ if $u_2(\varphi') \neq 0$. We conclude that $u_2(\varphi') = 0$, and, as $\varphi'$ was arbitrary, that $u_2 \equiv 0$.
		
		Next, let $\varphi' \in C_c^\infty(\RR^{n-1})$ be arbitrary and consider for $\varepsilon > 0$ the functions
		\begin{align*}
			g_\varepsilon^\pm(x',x_n) = \varphi'(x') \sqrt{(x_n \pm \varepsilon)^2 + \varepsilon^2}, \quad (x',x_n) \in \RR^n. 
		\end{align*}
		Letting $C_4 = 4\|\varphi'\|_{C^2_b(\RR^{n-1})}$, $C_5 = \|\varphi'\|_{C^2_b(\RR^{n-1})}$ and $C_6 = \|\varphi'\|_\infty$, we define
		\begin{align*}
			h_\varepsilon^\pm = C_4 \frac{\|x\|^2}{2} + C_5\|x\|^4 + C_6 \sqrt{(x_n \pm \varepsilon)^2 + \varepsilon^2}.
		\end{align*}
		Then, by the same reasoning as before, for $\varepsilon$ small enough, the Hessian of $h_\varepsilon^\pm + g_\varepsilon^\pm$ is positive semi-definite and thus $h_\varepsilon^\pm + g_\varepsilon^\pm$ is convex. Moreover, as $\varepsilon \to 0^+$,
		\begin{align*}
			h_\varepsilon^\pm(x) \to h_0(x) := C_4\frac{\|x\|^2}{2} + C_5\|x\|^4 + C_6 |x_n|, \quad x \in \RR^n,
		\end{align*}
		and therefore $h_\varepsilon^\pm(x) + g_\varepsilon^\pm(x) \to h_0(x) + \varphi'(x') |x_n|$, which is therefore also convex.
		
		Consequently, since $g_\varepsilon^\pm$ is smooth for $\varepsilon > 0$, we can calculate
		\begin{align}\label{eq:prfRestrValToRkGW2}
			\GW(\mu)[g_\varepsilon^\pm] = \mu(h_\varepsilon^\pm + g_\varepsilon^\pm) - \mu(h_\varepsilon^\pm),
		\end{align}
		where the right-hand side converges for $\varepsilon \to 0^+$ to the same value $\mu(h_0 + \varphi' |x_n|) - \mu(h_0)$. However, by \eqref{eq:prfRestrValToRkGW1}, the left-hand side of \eqref{eq:prfRestrValToRkGW2} is given by
		\begin{align*}
			\GW(\mu)[g_\varepsilon^\pm] = \sqrt{2}\varepsilon u_0(\varphi') \pm \frac{1}{\sqrt{2}} u_1(\varphi') \to \pm \frac{1}{\sqrt{2}} u_1(\varphi'), \quad \varepsilon \to 0^+.
		\end{align*}
		We deduce that $u_1(\varphi') = 0$, that is, as $\varphi'$ was arbitrary, that $u_1\equiv 0$. Consequently, if $f\in\Conv(\RR^n,\RR)\cap C^\infty(\RR^n)$, then
		\begin{align*}
			\mu(f)=\GW(\mu)[f]=u_0(f(\cdot,0))=u_0(f|_E)=\GW(\mu)[\pi_E^*f|_E]=\mu(\pi_E^*f|_E).
		\end{align*} 
		This implies the first statement by approximation.
		
		Now assume that $k>i$. Then $\mu_E\in \VConv_k(E,F)$, where $\dim E=i<k$, and thus $\mu_E=0$ by the  McMullen-type composition in Theorem~\ref{thm:mcmullenVConv}. Consequently, $\mu=0$.
	\end{proof}

\begin{remark}
	Theorem \ref{thm:restrSuppGWRestrVal} has the following interpretation in terms of the Goodey--Weil distributions: If $\mu\in\VConv_k(\RR^n,F)$ is given by $\mu(f)=\mu_E(f|_E)$ for some $\mu_E\in \VConv_k(\RR^n,F)$, $E\in \Grass_i(\RR^n)$, then for every $\varphi \in C^\infty_c((\RR^n)^k)$
	\begin{align*}
		\GW(\mu)[\varphi] = \GW(\mu_E)[\varphi|_{E^k}].
	\end{align*}
	In other words, $\GW(\mu)[\varphi]$ only depends on the derivatives of $\varphi$ tangential to $E^k$.
\end{remark}

\begin{remark}
		Let us point out that in the proof of Theorem~\ref{thm:restrSuppGWRestrVal} we only work with the Goodey--Weil distribution of $\mu$. Consequently, the proof directly extends to the much larger class of valuations that admit Goodey--Weil distributions (see, e.g., \cite{Knoerr2024}). However, we will not need this generality in the following.
\end{remark}

We mention a direct consequence of Theorems~\ref{thm:restrSuppGWRestrVal} and \ref{thm:mcmullenVConv} for later reference.

\begin{corollary}\label{cor:vconvRestrSuppNonHom}
	Suppose that $\mu \in \VConv(\RR^n)$ satisfies $\supp \mu \subseteq E$ for some $E \in \Grass_i(\RR^n)$, $0 \leq i \leq n-1$. Then there exists $\mu_E \in \VConv(E)$ such that
	\begin{align*}
		\mu(f) = \mu_E(f|_E), \quad \forall f \in \Conv(\RR^n, \RR).
	\end{align*}
	Hence, $\mu(f)$ depends only on the values of $f|_E$, and $\mu \in \bigoplus_{k=0}^{i} \VConv_k(\RR^n)$.
\end{corollary}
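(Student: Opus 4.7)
The plan is to combine Theorem~\ref{thm:restrSuppGWRestrVal} with the McMullen-type homogeneous decomposition provided by Theorem~\ref{thm:mcmullenVConv}, reducing to the (already handled) homogeneous case component by component.

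First, I would write $\mu=\sum_{k=0}^n\mu_k$ with $\mu_k\in\VConv_k(\RR^n)$, using Theorem~\ref{thm:mcmullenVConv}. Recall that the support of a non-homogeneous valuation was defined as $\supp\mu=\bigcup_{k=1}^n\supp\mu_k$, so the hypothesis $\supp\mu\subseteq E$ forces $\supp\mu_k\subseteq E$ for every $k\ge 1$. Applying Theorem~\ref{thm:restrSuppGWRestrVal} to each homogeneous component, I obtain the following dichotomy: if $k>i$, then $\mu_k=0$; if $1\le k\le i$, then there exists $(\mu_k)_E\in\VConv_k(E)$ such that
\begin{align*}
\mu_k(f)=(\mu_k)_E(f|_E), \quad f\in\Conv(\RR^n,\RR).
\end{align*}
The component $\mu_0$ is a constant, which I identify with the corresponding constant valuation $(\mu_0)_E\in\VConv_0(E)$.

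Setting $\mu_E:=\sum_{k=0}^i (\mu_k)_E\in\VConv(E)$ and summing over $k$, I obtain
\begin{align*}
\mu(f)=\sum_{k=0}^i \mu_k(f)=\sum_{k=0}^i (\mu_k)_E(f|_E)=\mu_E(f|_E), \quad f\in\Conv(\RR^n,\RR),
\end{align*}
which is the first claim. In particular, $\mu(f)$ only depends on $f|_E$. The vanishing $\mu_k=0$ for $k>i$ established above immediately yields the inclusion $\mu\in\bigoplus_{k=0}^i\VConv_k(\RR^n)$, finishing the proof.

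There is no real obstacle here: the work has already been done in Theorem~\ref{thm:restrSuppGWRestrVal}, and the only thing to verify is that the support hypothesis on $\mu$ transfers to each homogeneous component, which is immediate from the definition of $\supp\mu$ in the non-homogeneous setting. The only minor point requiring care is the separate treatment of the degree $0$ component, for which the theorem does not directly apply but which is trivially a constant.
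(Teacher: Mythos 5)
Your proof is correct and follows the same route the paper has in mind (the paper states the corollary as a ``direct consequence'' of Theorems~\ref{thm:restrSuppGWRestrVal} and \ref{thm:mcmullenVConv} without writing out the details). The decomposition into homogeneous components, the observation that $\supp\mu\subseteq E$ transfers componentwise because $\supp\mu=\bigcup_{k\geq1}\supp\mu_k$, and the separate treatment of the constant $0$-homogeneous part are all exactly the intended steps.
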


%
	\section{Proof of the main results}
	In this section, we prove the main results for valuations on finite convex functions. The proof of Theorem~\ref{mthm:stateConvO} for valuations with values in $\ConvO(\RR^n)$ is postponed to Section~\ref{sec:convOStatements}.
	
	\begin{proof}[Proof of Theorem~\ref{mthm:charSLRequivVal}]
		Note that  \eqref{eq:thmSLREquivVal} defines a continuous, dually epi-translation-in\-variant and $\SL(n, \RR)$-equivariant valuation by Theorem~\ref{mthm:CharGLEquivEndo}. It remains to see that every such map has the desired representation.
		
		Let $\Psi: \Conv(\RR^n, \RR) \rightarrow \Conv(\RR^n,\RR)$ be a continuous, dually epi-translation-invariant and $\SL(n, \RR)$-equivariant valuation, and fix $x \in \RR^n$. By Theorem~\ref{thm:exGWdistrVConv}, $\Psi_x$ has compact support and, by Lemma~\ref{lem:equivPropGWdistr}, it is invariant under $\SL(n,\RR)_x$. If $x = 0$, this implies that $\supp \Psi_x \subseteq \{0\}$, as $\SL(n,\RR)$ acts transitively on $\RR^n \setminus\{0\}$.
		
		If $x \neq 0$, then $\SL(n,\RR)_x$ contains all shear maps fixing $x$. In particular, if $y \not \in \linspan\{x\}$, then the orbit of $y$ under $\SL(n,\RR)_x$ contains the line $y + tx, t \in \RR$, which is not compact. Consequently, $y \not \in \supp\Psi_x$, that is, $\supp \Psi_x \subset \linspan\{x\}$.
		
		Hence, by Corollary~\ref{cor:vconvRestrSuppNonHom} applied for $E = \linspan\{x\}$, there exist valuations $\Psi_x^k \in \VConv_k(\RR^n)$, $k=0,1$, such that $\Psi_x = \Psi_x^0 + \Psi_x^1$, $x \in \RR^n$. Clearly, by homogeneity and continuity, $\Psi_x^0(f) = \Psi_x^0(0) = \Psi_x(0)$, $x \in \RR^n$ and $f \in \Conv(\RR^n,\RR)$. Since $\Psi(0)$ is a $\SL(n,\RR)$-invariant and continuous function, $\Psi(0)$ is constant and $\Psi^0(f) \equiv c$, for some $c \in \RR$.
		
		Consequently, setting $\Psi^1(f) = \Psi(f) - c$, $f \in \Conv(\RR^n, \RR)$, we obtain a dually epi-translation-invariant, $1$-homogeneous valuation $\Psi^1: \Conv(\RR^n, \RR) \to \Conv(\RR^n,\RR)$, which clearly is also continuous and $\SL(n,\RR)$-equivariant.
		
		 We claim that $\Psi^1$ is in fact $\GL(n,\RR)$-equivariant. Indeed, repeating the argument, we see that $f\mapsto\Psi^1(f)[x]$ is supported on $\linspan\{x\}$, and consequently Corollary~\ref{cor:vconvRestrSuppNonHom} shows that $\Psi^1(f)[x]$ only depends on the values of $f\in\Conv(\RR^n, \RR)$ on $\linspan\{x\}$, $x \in \RR^n$. If $g\in\GL(n,\RR)$  is arbitrary and $x\in \RR^n$, we choose $\eta\in \SL(n,\RR)$ with $\eta x=gx$, which is possible since $n\ge 2$. Then $f\circ g=f\circ \eta$ on $\linspan\{x\}$. Since $\Psi^1(\cdot)[x]$ only depends on the values of its argument on $\linspan\{x\}$, we obtain
	 	\begin{align*}
	 		\Psi^1(f\circ g)[x]=\Psi^1(f\circ \eta)[x]=\Psi^1(f)[\eta x]=\Psi^1(f)[gx].
	 	\end{align*}
 		Thus $\Psi^1$ is $\GL(n,\RR)$-equivariant.	As a dually epi-translation-invariant, $1$-homoge\-neous valuation, $\Psi^1$ is also additive (this follows directly from Theorem~\ref{thm:mcmullenVConv} and the comment below it). We can therefore apply Theorem~\ref{mthm:CharGLEquivEndo} to $\Psi^1$,  which shows the representation~\eqref{eq:thmSLREquivVal} for $\Psi$.
	\end{proof}

	\begin{remark}
		Note that in the proof of Theorem~\ref{mthm:charSLRequivVal}, convexity of $\Psi(f)$ is only needed in the last step for the application of Theorem~\ref{mthm:CharGLEquivEndo}. In particular, the restriction on the possible degrees of homogeneity also applies to valuations with values in different function spaces, for example continuous functions, integrable functions, etc.
	\end{remark}
	
	\begin{proof}[Proof of Theorem~\ref{mthm:charSLRcontrVal}]
		Let $\Psi: \Conv(\RR^n, \RR) \rightarrow \Conv(\RR^n,\RR)$ be a continuous, dually epi-translation-invariant and $\SL(n, \RR)$-contravariant valuation, and fix $x \in \RR^n$. By Theorem~\ref{thm:exGWdistrVConv}, $\Psi_x$ has compact support and, by Lemma~\ref{lem:equivPropGWdistr}, it is invariant under the group
		\begin{align*}
			G_x = \{\eta \in \SL(n,\RR): \eta^T x = x \}.
		\end{align*}
		If $x = 0$, we conclude that $\supp \Psi_x \subseteq \{0\}$. If $x \neq 0$, then $G_x$ contains all maps $\eta \in \SL(n,\RR)$, such that $\eta^T$ is a shear map fixing $x$. Letting $y \in \RR^n$ and $z \in \RR^n \setminus \linspan\{x\}$, this implies that
		\begin{align*}
			\{\langle \eta y, z\rangle: \, \eta \in G_x\}  \supseteq \{\langle y, z\rangle + t \langle y,x \rangle: \, t \in \RR\}.
		\end{align*}
		If $y \in \supp \Psi_x$, then the set on the left-hand side must be bounded as it is contained in the image of $\supp\Psi_x$ under the continuous map $u \mapsto \langle u, z\rangle$. Hence, $\langle y, x \rangle$ must be zero, that is, $\supp \Psi_x \subseteq x^\perp$.
		
		Next, assume that $n \geq 3$. Then the group $G_x$ contains the subgroup $H_x \cong \SL(n-1,\RR)$ consisting of all maps $\eta \in \SL(n,\RR)$ such that $\eta x = x = \eta^T x$. As $H_x$ acts transitively on $x^\perp \setminus \{0\}$, the compactness of $\supp\Psi_x$ implies that $\supp\Psi_x \subseteq \{0\}$.
		
		Consequently, by Corollary~\ref{cor:vconvRestrSuppNonHom}, $\Psi_x$ must be $0$-homogeneous for every $x \in \RR^n$. By homogeneity and continuity, $\Psi_x(f) = \Psi_x(0)$, $f \in \Conv(\RR^n, \RR)$, and by $\SL(n,\RR)$-contravariance, $\Psi(0)\equiv c$ for some $c \in \RR$. Hence, $\Psi(f) \equiv c$, as claimed. 
		
		It remains to consider the case $n=2$. Here, we set $\widetilde{\Psi}(f) = \Psi(\vartheta^{-1} f)$, $f \in \Conv(\RR^2,\RR)$, where we denote by $\vartheta$ a rotation by $\pi/2$. Since $\eta\vartheta = \vartheta \eta^{-T}$ for any $\eta \in \SL(2,\RR)$, $\widetilde{\Psi}^1$ is $\SL(2,\RR)$-equivariant. The claim for $n=2$ thus follows from Theorem~\ref{mthm:charSLRequivVal}.
	\end{proof}

	\begin{proof}[Proof of Theorem~\ref{mthm:charSLCcontrVal}]
		The proof of Theorem~\ref{mthm:charSLCcontrVal} follows exactly with the same arguments as the proof of Theorem~\ref{mthm:charSLRcontrVal}. In this case, the support of $\Psi_z$, $z\in\CC^n$, has to be invariant under the group
		\begin{align*}
			G_z=\{\eta\in \SL(n,\CC):\eta^*z=z\},
		\end{align*}
		where $\eta^*$ denotes the conjugate transpose of $\eta$. For $n\ge3$, this group contains all $\eta$ such that $\eta^*$ is a complex shear map fixing $z$, as well as the the subgroup of all $\eta\in \SL(n,\CC)$  that satisfy $\eta z=z=\eta^*z$, which is isomorphic to $\SL(n-1,\CC)$ for $z\ne 0$. Now the same arguments as in the proof of Theorem~\ref{mthm:charSLRcontrVal} show that $\supp \Psi_z\subset \{0\}$ for all $ z\in \CC^n$. In particular, $\Psi_z$ is $0$-homogeneous for every $z\in\CC^n$ by Corollary~\ref{cor:vconvRestrSuppNonHom}, which implies
		\begin{align*}
			\Psi(f)[z]=\Psi_z(f)=\Psi_z(0)=\Psi(0)[z].
		\end{align*}
		Since $\Psi(0)\in\Conv(\CC^n,\RR)$ is an $\SL(n,\CC)$-invariant function, it has to be constant, which completes the proof.
	\end{proof}

 	\begin{proof}[Proof of Corollary~\ref{mthm:charSLRequivValReal}]
 		Let $\Psi: \Conv(\RR^n, \RR) \rightarrow \RR^n$ be a continuous, dually epi-translation-invariant and $\SL(n, \RR)$-equivariant valuation. Noting that the canonical map $\RR^n \to (\RR^n)^\ast \subset \Conv(\RR^n, \RR)$ is $\SL(n,\RR)$-contravariant, we obtain an $\SL(n,\RR)$-contravariant valuation $\widehat{\Psi}: \Conv(\RR^n, \RR) \to \Conv(\RR^n, \RR)$, defined by
 		\begin{align*}
 			\widehat{\Psi}(f)[x] = \langle x, \Psi(f) \rangle, \quad x \in \RR^n, f\in\Conv(\RR^n, \RR).
 		\end{align*}
 	
 		If $n \geq 3$, by Theorem~\ref{mthm:charSLRcontrVal}, $\widehat{\Psi}(f) \equiv c$, $f \in \Conv(\RR^n,\RR)$, for some $c \in \RR$. As $\widehat{\Psi}(f)$ is linear by definition, $c=0$ and the claim follows in this case.
 		
		If $n=2$, by Theorem~\ref{mthm:charSLRcontrVal}, eq.~\eqref{eq:thmSLRcontrVal}, there exist $c \in \RR$ and a measure $\nu \in \MeasC^+(\RR)$ such that
		\begin{align*}
			\widehat{\Psi}(f)[x] = c + \int_{\RR\setminus\{0\}} \frac{f(s\vartheta x) - f(0)}{|s|^2} d\nu(s), \quad x \in \RR^2,
		\end{align*}
		for all $f \in \Conv(\RR^2, \RR)$. Again, as $\widehat{\Psi}(f)$ is linear for every $f$, we conclude that $c=\widehat{\Psi}(f)[0] = 0$. Moreover, for $f \in \Conv(\RR^2,\RR)$ and $x,y \in \RR^2$, by convexity of $f$
		\begin{align*}
			\frac{1}{2} \widehat{\Psi}(f)[x] + \frac{1}{2} \widehat{\Psi}(f)[y] &= \widehat{\Psi}(f)\left[\frac{x+y}{2}\right] = \int_{\RR\setminus\{0\}}\frac{f\left(s\vartheta \frac{x+y}{2}\right) - f(0)}{|s|^2} d\nu(s)\\
			&\leq \int_{\RR\setminus\{0\}}\frac{f(s\vartheta x) - f(0)}{2|s|^2} d\nu(s) + \int_{\RR\setminus\{0\}}\frac{f(s\vartheta y) - f(0)}{2|s|^2} d\nu(s) \\&= \frac{1}{2} \widehat{\Psi}(f)[x] + \frac{1}{2} \widehat{\Psi}(f)[y].
		\end{align*}
		Since $\nu$ is a non-negative measure, this implies $f\left(s\vartheta \frac{x+y}{2}\right) = \frac{1}{2}f(s\vartheta x) + \frac{1}{2}f(s\vartheta y)$ for $\nu$-almost all $s \in \RR\setminus\{0\}$. Picking $f$ to be strictly convex, we deduce that $\nu = 0$, yielding the claim for $n=2$.
 	\end{proof}

	\begin{remark}
		Let us point out that our proof of Theorem~\ref{mthm:charSLRequivVal} can be directly adapted to the dually epi-translation-\emph{equivariant} setting. Indeed, let $\Psi: \Conv(\RR^n, \RR) \to \Conv(\RR^n, \RR)$ be dually epi-translation-equivariant, that is, $\Psi(f + \lambda) = \Psi(f) + \lambda$ for every affine $\lambda$. Then the map $\tilde \Psi = \Psi - \id$ is dually epi-translation-invariant, but will in general not map into $\Conv(\RR^n,\RR)$ anymore. As our argument for the Goodey--Weil distributions does not make use of the convexity at all, we can conclude similarly that if $\Psi$ is $\SL(n,\RR)$-equivariant, then $\tilde\Psi$ is the sum of a $0$- and a $1$-homogeneous valuation. Hence, $\Psi$ is the sum of these plus the identity. Subtracting the (constant) $0$-homogeneous component from $\Psi$, we obtain a map $\Psi^1$ satisfying the conditions of Theorem~\ref{mthm:CharGLEquivEndo}, which does not need dual epi-translation-invariance.
		
		Further results in this direction where obtained by Li in \cite{Li2023}.
	\end{remark}

	\section{Extension to $\ConvO(\RR^n)$ -- proof of Theorem~\ref{mthm:stateConvO}}
	\label{sec:convOStatements}
	In this section, we prove Theorem~\ref{mthm:stateConvO}. The proof is based on a dissection argument for valuations $\mathcal{K}^m \to \ConvO(\RR^n)$, which is then lifted to valuations on convex functions. In order to state the result for valuations on convex bodies, let $\dom f = \{x \in \RR^n: f(x) < \infty\}$ be the \emph{domain} of $f \in \ConvO(\RR^n)$.	
	
	\begin{lemma}\label{lem:domFinKConv0}
		Let $\Psi:\mathcal{K}^m\rightarrow \Conv_{(0)}(\RR^n)$ be a continuous valuation,
		\begin{align*}
			U:=\interior\left(\bigcap_{x\in\RR^m}\dom \Psi(\{x\})\right).
		\end{align*} Then $U\subset \dom \Psi(K)$ for all $K\in\mathcal{K}^m$.
	\end{lemma}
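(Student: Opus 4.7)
The plan is to dissect $K$ into convex pieces of sufficiently small diameter via a coordinate-hyperplane grid, and then use the valuation identity to express $\Psi(K)$ as a finite combination of $\Psi$ evaluated on these small pieces, each of which will be forced to be finite at $y$ by continuity.

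\emph{Step 1 (uniform smallness threshold).} Fix $y \in U$. By definition of $U$, for every $x \in \RR^m$ the point $y$ lies in the interior of $\dom \Psi(\{x\})$. Combining continuity of $\Psi$ with the classical fact that for convex functions $f_n \to f$ epi and $y \in \interior \dom f$ one has $f_n(y) \to f(y)$, I obtain that the set $\{L \in \mathcal{K}^m : y \in \dom \Psi(L)\}$ contains a Hausdorff-neighborhood of every singleton $\{x\}$, $x \in \RR^m$. Since $\{\{x\} : x \in K\}$ is compact in $\mathcal{K}^m$, a finite sub-cover argument produces $\delta>0$ such that every convex body $L \subseteq K$ with $\diam L < \delta$ satisfies $y \in \dom \Psi(L)$.

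\emph{Step 2 (dissection and iterated valuation identity).} Slice $K$ by the family of coordinate hyperplanes $\{x_i = k\delta'\}$, $k\in\ZZ$, $i=1,\dots,m$, with $\delta' < \delta/\sqrt{m}$. The resulting finitely many closed cells and all their non-empty mutual intersections are convex subsets of $K$ of diameter less than $\delta$, so by Step 1 each of them lies in $\dom \Psi(\cdot)$ at the point $y$. Iterating the valuation identity $\Psi(A)+\Psi(A\cap B)=\Psi(A\cup B)+\Psi(B)$ one slicing at a time---applied each time to a convex body being split by a single hyperplane into two convex halves, with a lower-dimensional convex intersection---yields an identity in $\ConvO(\RR^n)$ of the form
\begin{align*}
    \Psi(K) + \sum_{\alpha} \Psi(J_\alpha) = \sum_{\beta} \Psi(M_\beta),
\end{align*}
where the finitely many $J_\alpha$ and $M_\beta$ are all convex subsets of $K$ of diameter less than $\delta$. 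Evaluating at $y$, every term on both sides apart from $\Psi(K)(y)$ is finite by Step 1, and hence so is $\Psi(K)(y)$, i.e., $y \in \dom \Psi(K)$.

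The main technical point is Step 1, which has to convert the pointwise openness condition defining $U$ (a condition in $\RR^n$) into a genuinely Hausdorff-uniform statement about convex bodies in $\mathcal{K}^m$; this is where the epi-convergence characterisation of pointwise convergence at interior domain points, continuity of $\Psi$, and compactness of $K$ are combined. Step 2 is then essentially a careful bookkeeping of the valuation identity in additive form, so that no subtractions in $\ConvO(\RR^n)$ are needed.
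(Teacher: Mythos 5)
Your Step 1 is correct and matches the paper's preliminary estimate \eqref{eq:prfdomFKDefEpsxA}: by continuity, epi-convergence, and compactness one obtains a uniform Hausdorff-scale $\delta$ (the paper works with compact $A\subset U$ instead of a single $y$, but this is a minor variation).

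Step 2, however, contains a genuine gap. You assert that iterating the valuation identity $\Psi(A)+\Psi(A\cap B)=\Psi(A\cup B)+\Psi(B)$ produces a single relation
\begin{align*}
\Psi(K)+\sum_\alpha\Psi(J_\alpha)=\sum_\beta\Psi(M_\beta)
\end{align*}
with \emph{all} $J_\alpha, M_\beta$ of diameter less than $\delta$. This is not what the iteration delivers. When you cut $K$ by a hyperplane $H$, the term added on the left is $\Psi(K\cap H)$, and $K\cap H$, while lower-dimensional, typically has diameter comparable to $\diam K$, not smaller than $\delta$. You cannot subsequently dissect $K\cap H$ and substitute back into the first relation, because that would require subtracting $\Psi(K\cap H)$, and $\ConvO(\RR^n)$ (equivalently, $(-\infty,\infty]$ pointwise) is not cancellative: cancellation is only legitimate once one already knows the cancelled term is finite, which is exactly what is to be proved. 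Simply adding the new identity to both sides leaves the large cross-section on the left with coefficient $2$. The single all-small identity you describe would follow from the inclusion--exclusion formula for the grid dissection, but inclusion--exclusion is not a formal consequence of the valuation property in a non-cancellative semigroup; it needs an extension of $\Psi$ to polyconvex sets, which is not available here.

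What closes this gap in the paper is an induction on $\dim K$: the cut cross-sections $K\cap H_{i,l}$ have dimension at most $\dim K - 1$, so the induction hypothesis gives $y\in\dom\Psi(K\cap H_{i,l})$ in advance; evaluating the valuation identity at $y$ then shows $\Psi(K)(y)<\infty$ if and only if all $\Psi(K\cap S_{i,l})(y)<\infty$, and iterating over coordinates reduces to cells of diameter less than $\delta$, which are handled by Step 1. Your write-up has no induction and no mechanism for dealing with the large lower-dimensional cuts, so as it stands the argument does not go through. If you insert the dimension induction (and note that the valuation identity lets you conclude finiteness of one unknown term once all others are known finite), your proof becomes essentially the paper's.
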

	\begin{proof}
		For $x\in\RR^m$ set $\mathcal{B}_{x,R}:=\{K\in\mathcal{K}^m: K\subset B^\infty_R(x)\}$, where $B^\infty_R(x)$ denotes the $L^\infty$-ball/cube with center $x$ and radius $R> 0$. Then $\mathcal{B}_{x,R}$ is compact in $\mathcal{K}^m$ by Blaschke's selection theorem. Let $A\subset U$ be a compact subset. We claim that there exists $\varepsilon_{x,A}>0$ such that 
		\begin{align}
			\label{eq:prfdomFKDefEpsxA}
			A\subset \dom\Psi(K) \quad\text{for all}~K\in\mathcal{B}_{x,\varepsilon_{x,A}}.
		\end{align}
		Otherwise, there exists a sequence $(K_j)_j$ in $\mathcal{K}^m$ converging to $\{x\}$ such that for every $j\in\mathbb{N}$ there exists a point $x_j\in A$ with $x_j\notin \dom \Psi(K_j)$. However, the sequence $\Psi(K_j)$ epi-converges to $\Psi(\{x\})$, which is finite on the compact set $A$. Since $A$ does not intersect the boundary of $\dom \Psi(\{x\})$, this convergence is uniform on $A$ (see, e.g., \cite{Rockafellar1998}*{Thm.~7.17}). In particular, there exists $N\in\mathbb{N}$ such that $\Psi(K_j)$ is finite on $A$ for all $j\ge N$, which is a contradiction.
		
		Now let $K\in\mathcal{K}^m$ be an arbitrary convex body. We will establish by induction on the dimension $k$ of $K$ that $A\subset \dom \Psi(K)$. For $k=0$ this follows directly from the definition of $U$. Now assume that we have shown the claim for all convex bodies of dimension at most $k-1$. For a given $K\in\mathcal{K}^m$ of dimension $k$, choose $R>0$ such that $K\subset B_R^\infty(0)$. We can then find a finite number of points $x_1,\dots, x_N\in \RR^m$ such that $B_{R}^\infty(0)$ is covered by the sets $B_{\varepsilon_{x_j,A}/2}^\infty(x_j)$, $j=1, \dots, N$. Set
		\begin{align*}
			\varepsilon:=\frac{1}{2}\min_{1\le j\le N}\varepsilon_{x_j,A},\quad \text{ and } \quad	a_i:=\min\{x_i: x\in K\}, \quad i=1, \dots, m,
		\end{align*}
		and consider for $l\in\mathbb{N}$ the hyperplanes
		\begin{align*}
			H_{i,l}:=\{x\in\RR^m: x_i=a_i+l\varepsilon\}.
		\end{align*}
		Similarly, denote by $H^\pm _{i,l}$ the positive and negative half spaces corresponding to these hyperplanes.
		For $1\le i\le m$ consider the sets
		\begin{align*}
			S_{i,l}:=H_{i,l}^+\cap H_{i,l+1}^-=\{x\in \RR^m: a_i+l\varepsilon\le x_i\le a_i+(l+1)\varepsilon\}.
		\end{align*}
		Then there exists a minimal $M>0$ such that
		\begin{align*}
			K=\bigcup_{l=0}^M (K\cap S_{i,l}).
		\end{align*}
		If $M>1$, then $K$ is not contained in a hyperplane parallel to the $i$th coordinate hyperplane, and in this case the pairwise intersection of two convex bodies in the decomposition on the right-hand side is either empty or $K\cap H_{i,l}$, that is, of dimension at most $k-1$. In particular, $\Psi(K\cap H_{i,l})$ is finite on $A$ for $1\le l\le M-1$ by the induction assumption. Applying the valuation property repeatedly, we thus obtain
		\begin{align*}
			\Psi(K)[x]+\sum_{l=1}^{M-1}\Psi(K\cap H_{i,l})[x]=\sum_{l=0}^M\Psi(K\cap S_{i,l})[x]\quad\text{for}~x\in A.
		\end{align*}
		Since $\Psi(K\cap H_{i,l})$ is finite on $A$ for $1\le l\le M-1$, this implies that $\Psi(K)$ is finite on $A$ if and only if all of the functions $\Psi(K\cap S_{i,l})$ are finite on $A$. Iterating this argument for all coordinates $1\le i\le m$, we see that this is the case if and only if the functions
		\begin{align*}
			\Psi(K\cap S_{1,l_1}\cap \dots\cap S_{m,l_m}),\quad l_1,\dots,l_m\in\mathbb{N}
		\end{align*}
		are finite on $A$ whenever the intersection is non-empty. By construction, these sets are contained in a cube $B_{\varepsilon}^\infty(x)$ for some $x\in B_R^\infty(0)$. Since any $x\in B_R^\infty$ is contained in $B^\infty_{\varepsilon_{x_j,A}/2}(x_j)$ for some $x_j$, $1\le j\le N$, and $\varepsilon\leq\frac{\varepsilon_{x_j,A}}{2}$, we see that 
		\begin{align*}
			K\cap S_{1,l_1}\cap \dots\cap S_{m,l_m}\subset B^\infty_{\varepsilon_{x_j,A}}(x_j).
		\end{align*}
		If this intersection is non-empty, this implies $K\cap S_{1,l_1}\cap \dots\cap S_{m,l_m}\in \mathcal{B}_{x_j,\varepsilon_{x_j,A}}$ and thus, by \eqref{eq:prfdomFKDefEpsxA}, $\Psi(K\cap S_{1,l_1}\cap \dots\cap S_{m,l_m})$ is finite on $A$, completing the induction step.
		
		In total we obtain that for every $K\in\mathcal{K}^m$, the function $\Psi(K)$ is finite on every compact subset of $U$. Thus $\Psi(K)$ is finite on $U$, so $U\subset \dom \Psi(K)$.
	\end{proof}

	\medskip
	
	Next, we want to lift Lemma~\ref{lem:domFinKConv0} to valuations on convex functions. We will use a connection between convex functions in $\RR^n$ and convex bodies in $\RR^{n+1}$ established in \cite{Knoerr2023}, as well as an extension result from \cite{Knoerr2024}.
	
	For $K \in \mathcal{K}^{n+1}$, define the function $\lfloor K \rfloor: \RR^n\to(-\infty,\infty]$ by (see \cite{Knoerr2023})
	\begin{align*}
		\lfloor K \rfloor(x) = \inf\{t \in \RR: \, (x,t) \in K\}, \quad x \in \RR^n,
	\end{align*}
	so in particular, $\lfloor K \rfloor(x) = \infty$, if $K \cap \{x\}\times \RR = \emptyset$.
	It is easily seen that $\lfloor K \rfloor \in \Conv_{\mathrm{cd}}(\RR^n)$, the space of all convex, lower semi-continuous functions $f: \RR^n\to (-\infty, \infty]$ such that $\dom f$ is non-empty and compact.
	Moreover, it was shown in \cite{Knoerr2023} that $\lfloor \cdot \rfloor: \mathcal{K}^{n+1} \to \Conv_{\mathrm{cd}}(\RR^n)$ is continuous and onto.
	
	Note that the space $\Conv_{\mathrm{cd}}(\RR^n)$ can be identified with a subset of $\Conv(\RR^n, \RR)$ by taking the convex conjugate (Legendre transform), $f^\ast (x) = \sup_{y \in \RR^n} \langle x, y \rangle - f(y)$. Indeed, the space $\Conv(\RR^n, \RR)$ is mapped by the Legendre transform to the space 
	\begin{align*} 
		\Conv_{\mathrm{sc}}(\RR^n) = \{f: \RR^n \to (-\infty,\infty]:\, &\text{$f$ is convex, lower semi-continuous},\\ &f \not \equiv +\infty, \text{ and } \lim\limits_{\|x\|\to \infty}\frac{f(x)}{\|x\|} = \infty\},
	\end{align*}
	of super-coercive convex functions. Clearly, $\Conv_{\mathrm{cd}}(\RR^n) \subset \Conv_{\mathrm{sc}}(\RR^n)$ and, thus, its image under the the Legendre transform satisfies $\Conv_{\mathrm{cd}}(\RR^n)^\ast \subset \Conv(\RR^n, \RR)$.	This subset is dense (\cite{Knoerr2023}*{Cor.~2.3}) and the following extension property holds.
	\begin{theorem}[\cite{Knoerr2024}]\label{thm:extValCdToSc}
		Suppose that $\mu: \Conv_{\mathrm{cd}}(\RR^n)^\ast \to \RR$ is a continuous, dually epi-translation-invariant valuation. Then $\mu$ extends uniquely by continuity to a continuous, dually epi-translation-invariant valuation $\hat{\mu}: \Conv(\RR^n, \RR) \to \RR$.
	\end{theorem}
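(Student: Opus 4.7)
The plan is to treat uniqueness and existence separately. Uniqueness is immediate: any two continuous extensions of $\mu$ must agree on the dense subset $\Conv_{\mathrm{cd}}(\RR^n)^\ast\subset\Conv(\RR^n,\RR)$ and hence coincide on all of $\Conv(\RR^n,\RR)$ by continuity with respect to epi-convergence.

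For existence, my plan is to replicate, on the dense subset, enough of the Goodey--Weil machinery to associate a compactly supported distribution with $\mu$ and use this to build the extension. First, I would establish a McMullen-type homogeneous decomposition $\mu = \sum_{k=0}^n \mu_k$ with $\mu_k$ continuous, $k$-homogeneous and dually epi-translation-invariant on $\Conv_{\mathrm{cd}}(\RR^n)^\ast$; as in \cite{Colesanti2019b, Knoerr2020a}, this reduces to showing that $(\lambda_1,\dots,\lambda_m)\mapsto \mu(\sum_j\lambda_j f_j)$ is polynomial in $\lambda_j\ge 0$ for $f_j\in\Conv_{\mathrm{cd}}(\RR^n)^\ast$. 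Polarization then yields multilinear forms $\bar{\mu}_k$, and the crucial step is to show that each extends to a distribution on $(\RR^n)^k$ with compact support. Once this is available, the extension is defined component-wise: given $f\in\Conv(\RR^n,\RR)$, choose $\phi_f\in\Conv_{\mathrm{cd}}(\RR^n)^\ast$ agreeing with $f$ on a neighborhood of $\supp\GW(\mu_k)$, and set $\hat{\mu}_k(f):=\mu_k(\phi_f)$. A locality argument in the spirit of \cite{Knoerr2020a}*{Prop.~6.3} shows that $\hat\mu_k(f)$ is independent of the choice of $\phi_f$; the valuation property, continuity, and dual epi-translation invariance of $\hat{\mu}:=\sum_k\hat{\mu}_k$ then follow by standard density arguments, and $\hat{\mu}|_{\Conv_{\mathrm{cd}}(\RR^n)^\ast}=\mu$ by construction.

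The main obstacle is establishing compact support of $\GW(\mu_k)$ while working only with $\Conv_{\mathrm{cd}}(\RR^n)^\ast$. The arguments in \cite{Knoerr2020a} for $\VConv(\RR^n)$ use auxiliary convex functions such as paraboloids and smoothed norms (cf.\ the proof of Theorem~\ref{thm:restrSuppGWRestrVal} above), which lie in $\Conv(\RR^n,\RR)$ but whose Legendre transforms need not have compact domain, so they are not directly available in $\Conv_{\mathrm{cd}}(\RR^n)^\ast$. One would instead substitute test functions of controlled (at most linear) growth, obtained as Legendre transforms of convex bodies regarded as elements of $\Conv_{\mathrm{cd}}(\RR^n)$, combined with truncation arguments that exploit the valuation property to reduce to the allowed class. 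Once compact support has been established, the rest of the construction and the verification of the required properties are essentially routine.
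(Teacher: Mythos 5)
This theorem is not proved in the paper at all: it is imported verbatim from \cite{Knoerr2024}, so there is no in-paper argument to compare against, and your proposal has to be judged as a reconstruction of the cited result rather than as a variant of something the paper does.

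On its own terms, your outline is plausible in structure but leaves the genuinely hard step unresolved. You correctly identify that compactness of the support of $\GW(\mu_k)$ is the crux, but the suggested remedy---``substitute test functions of controlled growth \dots combined with truncation arguments''---is precisely where all the work lives, and as stated it is not an argument. The compactness proof in \cite{Knoerr2020a} rests on a uniform-boundedness principle applied to the Fr\'echet space $\Conv(\RR^n,\RR)$; the dense subspace $\Conv_{\mathrm{cd}}(\RR^n)^\ast$ (Lipschitz convex functions) is not complete in the epi-topology, so that argument does not transfer, and the same completeness issue already threatens your first step, the McMullen-type decomposition: both \cite{Colesanti2019b} and \cite{Knoerr2020a} establish polynomiality of $(\lambda_1,\dots,\lambda_m)\mapsto\mu(\sum_j\lambda_j f_j)$ using topological properties of the full domain. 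Without a replacement for these, the polarization and hence the distribution $\GW(\mu_k)$ are not yet available, and the circular flavor---you would like a compactly supported distribution in order to build the extension, but the standard route to compact support presupposes the extension---is exactly why an extension theorem such as the one in \cite{Knoerr2024} is needed in the first place. Your uniqueness argument and the locality-based definition of $\hat\mu_k$ once compact support is granted are fine, and the observation that any $f\in\Conv(\RR^n,\RR)$ can be matched near a compact set by a Lipschitz convex function (e.g.\ via infimal convolution with $L\|\cdot\|$) is correct; but the proposal as written does not close the gap it names, so it does not yet constitute a proof.
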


	Let us further point out that for an affine function $\lambda = \langle v, \cdot \rangle + c$, its convex conjugate is given by $\lambda^\ast(x) = \mathbbm{1}_{\{v\}}^\infty - c = \lfloor\{(v,-c)\}\rfloor$, where $\mathbbm{1}^\infty_K$ denotes the convex indicator function of $K$ taking the value $0$ in $K$ and $\infty$ outside.
	
	We are now ready to prove Theorem~\ref{mthm:stateConvO}.
	\begin{proof}[Proof of Theorem~\ref{mthm:stateConvO}]
		Suppose that $\Psi: \Conv(\RR^n, \RR) \to \ConvO(\RR^n)$ is a continuous and dually epi-translation-invariant valuation such that $\Psi(0) \in \Conv(\RR^n, \RR)$.
		
		First, consider $\tilde{\Psi}:\mathcal{K}^{n+1}\rightarrow \Conv_{(0)}(\RR^n)$ given by $\tilde{\Psi}(K):=\Psi(\lfloor K \rfloor^\ast)$, $K \in \mathcal{K}^{n+1}$. Then, by the continuity of the Legendre transform and $\lfloor\cdot\rfloor$, $\tilde{\Psi}$ is a continuous map and as the Legendre transform exchanges minimum and maximum, it is also a valuation, compare \cite[Thm. 9]{Knoerr2023}. Consequently, Lemma~\ref{lem:domFinKConv0} implies
		\begin{align*}
			U\subset \dom \tilde{\Psi}(K)=\dom \Psi(\lfloor K\rfloor^\ast)\quad\text{for all}~K\in\mathcal{K}^{n+1},
		\end{align*}
		where $U$ is the largest open set contained in all $\dom \tilde{\Psi}(\{(x,t)\})$, $(x,t) \in \RR^{n+1}$. By the remark above the proof, $\lfloor (x,t) \rfloor = \lambda^\ast$ for $\lambda(y) = \langle x, y \rangle -t$. Hence, $\tilde{\Psi}(\{(x,t)\}) = \Psi(\lambda) = \Psi(0)$ for all $(x,t) \in \RR^{n+1}$, as $\Psi$ is dually epi-translation-invariant.
		
		We conclude that $U = \RR^n$. Since $\lfloor\cdot\rfloor:\mathcal{K}^{n+1}\rightarrow \Conv_{\mathrm{cd}}(\RR^n)$ is onto, this implies that $\Psi(f)$ is finite for all $f \in \Conv_{\mathrm{cd}}(\RR^n)^\ast$. By Theorem~\ref{thm:extValCdToSc}, $\Psi_x|_{\Conv_{\mathrm{cd}}(\RR^n)^\ast}$ extends uniquely by continuity to a real-valued (that is, finite) valuation on $\Conv(\RR^n, \RR)$ for every $x \in \RR^n$, which we denote by $\widehat{\Psi}_x$. For $f\in\Conv(\RR^n,\RR)$ define $\widehat{\Psi}(f):\RR^n\rightarrow\RR$ by $\widehat{\Psi}(f)[x]:=\widehat{\Psi}_x(f)$. Then $f\mapsto \widehat{\Psi}(f)$ is continuous with respect to pointwise convergence. Since this function is convex for functions belonging to the dense subspace $\Conv_{\mathrm{cd}}(\RR^n)^*$ of $\Conv(\RR^n,\RR)$ and pointwise limits of convex functions are again convex, $\widehat{\Psi}(f)\in\Conv(\RR^n,\RR)$ for every $f\in\Conv(\RR^n,\RR)$. By \cite[Thm.~7.58]{Rockafellar1998} the topology induced by epi-convergence coincides with the topology induced by pointwise convergence on $\Conv(\RR^n,\RR)$. In particular, we may consider this as a continuous map $\widehat{\Psi}:\Conv(\RR^n,\RR)\rightarrow\ConvO(\RR^n)$. Since this map coincides with $\Psi$ on the dense subspace $\Conv_{\mathrm{cd}}(\RR^n)^*$, these two maps have to coincide on $\Conv(\RR^n,\RR)$. In particular, $\Psi(f)\in\Conv(\RR^n,\RR)$ for every $f\in\Conv(\RR^n,\RR)$. This concludes the proof of the first statement.
		
		It remains to prove the second statement for equivariant valuations. To this end, let $\Psi: \Conv(\RR^n, \RR) \to \ConvO(\RR^n)$ be a continuous, dually epi-translation-invariant valuation which is $\SL(n,\RR)$-equi- resp.\ contravariant. By the equi- resp.\ contravariance, $\Psi(0)$ is $\SL(n,\RR)$-invariant. As $\Psi(0)\in \ConvO(\RR^n)$ is finite in a neighborhood of the origin, it must be constant and thus $\dom\Psi(0) = \RR^n$. Hence, the first part of the proof implies that $\Psi(\Conv(\RR^n, \RR)) \subset \Conv(\RR^n, \RR)$ and an application of Theorem~\ref{mthm:charSLRequivVal} resp.\ \ref{mthm:charSLRcontrVal} concludes the proof.
	\end{proof}

	Note that we actually proved the following statement.
	\begin{proposition}
		Let $\Psi: \Conv_{\mathrm{cd}}(\RR^n)^\ast \to \ConvO(\RR^n)$ be a continuous valuation. If $\Psi(h) \in \Conv(\RR^n, \RR)$ for every affine $h$, then $\Psi(\Conv_{\mathrm{cd}}(\RR^n)^\ast) \subset \Conv(\RR^n, \RR)$.
	\end{proposition}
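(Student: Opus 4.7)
The proposal is to repeat the first half of the proof of Theorem~\ref{mthm:stateConvO} verbatim, stopping before the extension step via Theorem~\ref{thm:extValCdToSc} (which is not needed here, since the domain is already $\Conv_{\mathrm{cd}}(\RR^n)^\ast$, exactly the subspace that maps onto via $\lfloor\cdot\rfloor^\ast$).

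First, I would transfer the problem to convex bodies. Define $\tilde{\Psi}:\mathcal{K}^{n+1}\rightarrow\Conv_{(0)}(\RR^n)$ by $\tilde{\Psi}(K):=\Psi(\lfloor K\rfloor^\ast)$. This is continuous as a composition of the continuous maps $\lfloor\cdot\rfloor$, the Legendre transform on $\Conv_{\mathrm{cd}}(\RR^n)$, and $\Psi$. Moreover, since the Legendre transform exchanges pointwise $\max$ and $\min$ and $\lfloor\cdot\rfloor$ is a valuation in the sense used in \cite{Knoerr2023}, $\tilde\Psi$ is a continuous valuation on $\mathcal{K}^{n+1}$.

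Next, I would check the hypothesis of Lemma~\ref{lem:domFinKConv0} for $\tilde\Psi$. For the singleton $K=\{(x,t)\}$ one has $\lfloor K\rfloor=\lambda_{x,t}^\ast$ with $\lambda_{x,t}(y)=\langle x,y\rangle-t$ affine, so
\begin{align*}
	\tilde{\Psi}(\{(x,t)\})=\Psi(\lambda_{x,t})\in\Conv(\RR^n,\RR)
\end{align*}
by the standing assumption that $\Psi$ is finite on all affine functions. Hence $\dom\tilde\Psi(\{(x,t)\})=\RR^n$ for every $(x,t)\in\RR^{n+1}$, so the open set
\begin{align*}
	U=\interior\Bigl(\bigcap_{(x,t)\in\RR^{n+1}}\dom\tilde\Psi(\{(x,t)\})\Bigr)
\end{align*}
equals $\RR^n$. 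Applying Lemma~\ref{lem:domFinKConv0} yields $\dom\tilde\Psi(K)=\RR^n$ for every $K\in\mathcal{K}^{n+1}$.

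Finally, since $\lfloor\cdot\rfloor:\mathcal{K}^{n+1}\to\Conv_{\mathrm{cd}}(\RR^n)$ is surjective, every $f\in\Conv_{\mathrm{cd}}(\RR^n)^\ast$ arises as $f=\lfloor K\rfloor^\ast$ for some $K\in\mathcal{K}^{n+1}$, so $\dom\Psi(f)=\dom\tilde\Psi(K)=\RR^n$. As $\Psi(f)\in\Conv_{(0)}(\RR^n)$ is already convex and lower semi-continuous, being finite everywhere places it in $\Conv(\RR^n,\RR)$, which is the desired conclusion. The main technical input is Lemma~\ref{lem:domFinKConv0}; once one has that and the dictionary $\lfloor\cdot\rfloor^\ast$ between $\mathcal{K}^{n+1}$ and $\Conv_{\mathrm{cd}}(\RR^n)^\ast$, there is essentially no additional work, which is why the remark follows "for free" from the proof of Theorem~\ref{mthm:stateConvO}.
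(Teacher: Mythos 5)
Your proposal is correct and matches the paper's intent exactly: the paper does not give a standalone proof of the Proposition but simply observes that the first half of the proof of Theorem~\ref{mthm:stateConvO} (transfer to $\mathcal{K}^{n+1}$ via $K\mapsto\lfloor K\rfloor^\ast$, then Lemma~\ref{lem:domFinKConv0}) already establishes it, with the hypothesis ``$\Psi(h)$ finite for all affine $h$'' replacing the use of dual epi-translation-invariance to show $U=\RR^n$. Your write-up is a faithful unwinding of that observation.
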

	The condition of dual epi-translation-invariance in Theorem~\ref{mthm:stateConvO} was only needed in the last step to lift the statement to $\Conv(\RR^n, \RR)$.

	\section*{Acknowledgments}
	We would like to thank Fabian Mussnig for his continuing and long-lasting interest in this work.
	The first-named author was supported by the Austrian Science Fund (FWF), \href{https://doi.org/10.55776/P34446}{doi:10.55776/P34446}.

	\bibliographystyle{abbrv}
	
	\bibliography{./booksEquivVals}
	
%
%
%
	

\end{document}